\begin{document}

\title*{Semi-magic matrices for dihedral groups}
% Use \titlerunning{Short Title} for an abbreviated version of
% your contribution title if the original one is too long
\author{Robert W. Donley, Jr.}
% Use \authorrunning{Short Title} for an abbreviated version of
% your contribution title if the original one is too long
\institute{Robert W. Donley, Jr., \at Queensborough Community College, Bayside, New York, \email{RDonley@qcc.cuny.edu}}
%
% Use the package "url.sty" to avoid
% problems with special characters
% used in your e-mail or web address
%
\maketitle

\abstract*{After reviewing the group structure and representation theory for the dihedral group $D_{2n},$ we consider an intertwining operator $\Phi_\rho$ from the group algebra $\mathbb{C}[D_{2n}]$ into a corresponding space of semi-magic matrices.  From this intertwining operator, one obtains the generating function for enumerating the associated semi-magic squares with fixed line sum and an algebra extending the circulant matrices.  While this work complements the approach to $D_{2n}$ through permutation polytopes, we use only methods from representation theory. }

\abstract{After reviewing the group structure and representation theory for the dihedral group $D_{2n},$ we consider an intertwining operator $\Phi_\rho$ from the group algebra $\mathbb{C}[D_{2n}]$ into a corresponding space of semi-magic matrices.  From this intertwining operator, one obtains the generating function for enumerating the associated semi-magic squares with fixed line sum and an algebra extending the circulant matrices.  While this work complements the approach to $D_{2n}$ through permutation polytopes, we use only methods from representation theory. }

\keywords{character, circulant matrix, dihedral group, group algebra, intertwining operator,  orthogonal idempotent, semi-magic square}

\section{Introduction}
\label{sec:1}

Just over a century ago, the problem of counting semi-magic squares was initiated by MacMahon \cite{PMM}, who gave formulas for enumerating such squares of size three with fixed line sum.  Problems of this type remain an active area of study in combinatorics and, in particular, the subject of permutation polytopes.    The theory involved  touches on topics including, but not exhaustively, permutation matrices (\cite{Bkf}), Ehrhart polynomials (see \cite{BSa} or \cite{StE} for general background), Stanley's proof of the Anand-Dumir-Gupta conjecture (\cite{ADG}, \cite{StC}), toric varieties, elliptic curves and zeta functions, and $3j$-symbols in the quantum theory of angular momentum, in the form of Regge symbols and Regge symmetries.

A natural source of semi-magic squares comes from permutation polytopes, which arise from realizations of finite groups as subgroups in a symmetric group $S_n,$ the permutations of which may in turn be realized as permutation matrices.  The permutation matrices are the fundamental atoms of the theory.  Because these constructions are rooted in group theory, we approach the subject with the tools of representation theory, with an emphasis on intertwining operators, in particular, the homomorphism to permutation matrices. See \cite{GP}, Theorem 3.2.  With this morphism of central interest, we place these sets of semi-magic squares in vector spaces, in fact, semisimple associative algebras,  and obtain both old and new results through consideration of  the kernel and image.  

In our case, we consider semi-magic squares associated with the dihedral groups $D_{2n}$ with $2n$ elements.  Combinatorial questions, such as face structure of the permutation polytope, are well-understood (\cite{HSt}, \cite{BHN}, \cite{BDO}), with several formulas given for the Ehrhart polynomials.  Although we obtain the same formulas here, our narrower concern is counting semi-magic squares obtained from a set of generators. We consider this question using an elementary model (\cite{StE}, p. 225, Problem 15; p. 561, Problem 53), give a simple form of the generating function, and reconcile some of the several formulas. 

On the other hand,  because semi-magic matrices are closed under matrix multiplication, the image of the permutation map generalizes the commutative algebra of circulant matrices (for instance, \cite{Da}, \cite{KS}), which has been a subject of traditional, ongoing interest.  Again we use methods of representation theory, in particular, characters and projection formulas, to describe this extension both in representation theoretic terms and in terms of the Artin-Wedderburn theorem for semisimple associative algebras.   We do not consider properties as a Lie algebra as found in \cite{BFF}, although we give bases with quaternionic properties as a further source of distinguished semi-magic matrices.

We note that, while most  results here require only the real numbers, it is helpful to assume all representations are over the complex numbers.  The first three sections of \cite{FH} cover all required background from representation theory. For a thorough account with consideration of general fields and for Artin-Wedderburn theory of semisimple associative algebras, see \cite{CR}.

\section{Definitions and notations}
\label{sec:2}

Fix $n\ge 3$, and let $G=D_{2n}$ denote the dihedral group with $2n$ elements. We consider four realizations of this group:

\begin{itemize}
\item the symmetry group of the regular $n$-gon,
\item a presentation on two generators with three relations,
\item a subgroup of the symmetric group on $n$ elements, and
\item a subgroup of the group of permutation matrices of size $n$.
\end{itemize}
It will be extremely convenient to switch between these realizations, depending on which properties we wish to emphasize.  Let $e$ denote the identity in the first three realizations.

In the first realization, we orient the regular $n$-gon  symmetrically about the origin in the $xy$-plane, labeling the vertices counter-clockwise by 1 through $n$, with 1 closest to the $x$-axis in the first quadrant.  Vertices lie on the $x$-axis only when $n$ is odd, in which case the vertex labeled $\frac{n+1}2$ lies on the negative $x$-axis.   

Denote by $R$ the counter-clockwise rotation about the origin by $\frac{2\pi}{n}$ and by $C$ the reflection across the $x$-axis (or complex conjugation).   Then $D_{2n}$ contains $n$ rotations of the form 
$$R^k\qquad (0\le k < n)$$
and $n$ reflections
$$CR^k\qquad (0\le k < n).$$
With $|x|$ denoting the order of the element $x$, the second realization may be given by the relations
\begin{equation}|R|=n,\qquad |C|=2,\qquad CRC=R^{-1}.\end{equation}
In cycle notation for permutations,
$$R=(12\dots n),\qquad C=(1n)(2\ n-1)\dots ,$$
and one readily verifies these permutations satisfy the three relations.

Finally, one may assign to each permutation $\sigma$ its corresponding permutation matrix $P_\sigma$:  for $1\le i, j\le n,$
$$if\quad \sigma(i)=j\quad then\quad [P_\sigma]_{ji}=1,  \quad otherwise \quad [P_\sigma]_{ji}=0.$$
If $\{e_i\}$ denotes the standard basis of $\mathbb{C}^n$, then $P_\sigma(e_i)=e_{\sigma(i)}$ and $P_\tau P_\sigma=P_{\tau\circ \sigma}$, where $\circ$ denotes composition of maps, usually omitted henceforth.
 The rotations belong to the space of  circulant matrices; that is, the corresponding matrices have all entries equal to zero except for one diagonal of ones, continuing through the left-hand side.  The diagonal of ones for $R^k$ starts in column one at row $k+1$.  On the other hand,  with our choice of numbering, the reflections also correspond to $(-1)$-circulant matrices, now with the diagonal of ones to the left.  In particular, the counteridentity matrix $P_C$ has a diagonal of ones along the main diagonal to the left. For example, when $n=4,$
$$P_R=P_{(1234)}=\left|
\begin{array}{cccc} 
 0\ & 0\ & 0\ & 1 \\
 1\ & 0\ & 0\ & 0 \\
 0\ & 1\ & 0\ & 0 \\
 0\ & 0\ & 1\ & 0
\end{array}\right|,\quad P_C=P_{(14)(23)}=\left|
\begin{array}{cccc} 
0\ & 0\ & 0\  & 1 \\
0\ & 0\ & 1\  & 0 \\
0\ & 1\ & 0\  & 0 \\
1\ & 0\ & 0\ & 0
\end{array}\right|.$$

Multiplication by $P_C$ on the left inverts columns, while  multiplication on the right inverts rows.  Thus multiplication by $C$ on either side switches between rotations and reflections or, by $P_C$,  circulant and $(-1)$-circulant elements of $D_{2n}$.

\section{Structure of $D_{2n}$}
\label{sec:3}

We review the basic structure of $D_{2n}$, with an emphasis on features needed for character tables.  In particular, we consider conjugacy classes and the commutator subgroup.  Both items vary based on the parity of $n$. An efficient calculation of both items follows directly from (1).  

Denote the conjugacy class of $\sigma$ by $C_\sigma,$ and recall that the commutator subgroup $[G, G]$ of $G$ is generated by the commutators
$$\{xyx^{-1}y^{-1}\ |\ x, y\in G\}.$$

\begin{example}[$n$ odd] There are $\frac{n+3}2$ conjugacy classes in $D_{2n}$, given by 
\begin{itemize}
\item $\{e\}$
\item $\frac{n-1}2$ classes of the form $\{R^{\pm k}\}$ for $1 \le k \le \frac{n-1}2$
\item one single class $C_C$ of size $n$ consisting of  all reflections.
\end{itemize}

The commutator subgroup consists of the rotation subgroup $\langle R\rangle$, so that the abelianization $D_{2n}/[D_{2n}, D_{2n}]$ of $D_{2n}$ has two elements.  Thus the character group  consists of two elements
$$D_{2n}^*=\{\chi_{triv},\   \chi_{det}\},$$
where, for all  $g$ in  $D_{2n},$ 
$$\chi_{triv}(g)=1,\qquad 
\chi_{det}(R^k)=1, \qquad \chi_{det}(CR^k)=-1.$$

With $n$ odd,  the $sgn$ character 
$$\chi_{sgn}(\sigma)=det(P_\sigma)$$ 
alternates between $\chi_{triv}\  (\frac{n-1}2\ even)$ and $\chi_{det}\  (\frac{n-1}2\ odd)$ based on the number of transpositions in $C=(1n)(2\ n-1)\dots$. 
\end{example}

\begin{example}[$n$ even]
When $n=2m$, there are $m+3$ conjugacy classes, given by 
\begin{itemize}
\item $\{e\}$
\item $r^m,$ the nontrivial central element,
\item $m-1$ classes of the form $\{R^{\pm k}\}$ for $1 \le k < m$, and
\item two reflection classes  $C_C=C_{CR^2}$ and $C_{CR}$, each of size $m$.
\end{itemize}

The splitting of the reflections is also seen in permutations; conjugation preserves cycle structure, and reflections have either no fixed points ($C$) or two fixed points ($CR$).

The commutator subgroup now consists of the subgroup $\langle R^2\rangle$, so that the abelianization of $D_{2n}$ has four elements, given by
$$\mathbb{Z}/2 \times \mathbb{Z}/2\ \cong\ \{{\bar e},\ {\bar R}, {\bar C},\ \overline{CR}\};$$
the nontrivial elements have order two. 
Thus the character group consists of four elements, given by
$$D_{2n}^*=\{\chi_{triv},\   \chi_{det},\ \chi_{sgn},\ \chi_{det}\cdot\chi_{sgn}\}.$$

As before, the values of $\chi_{sgn}$ and $\chi_{det}\cdot \chi_{sgn}$ on the reflection classes alternate based on the cycle structure of $C$.
\end{example}

\section{Irreducible types for $\rho$}
\label{sec:4}

Now we  define the permutation representation
$$\rho: D_{2n} \rightarrow GL(n, \mathbb{C}),$$
$$\rho(\sigma)=P_\sigma$$
and give its decomposition in terms of irreducible representations of $D_{2n}.$   As a permutation representation, the character of $\rho$ encodes the fixed points for each permutation.

Let $\pi_2: D_{2n} \rightarrow GL(2, \mathbb{C})$
be the two-dimensional representation defined by extending the action of $D_{2n}$ on $\mathbb{R}^2$ to $\mathbb{C}^2.$   One verifies from Tables 1 and 2 below that $\pi_2$ is irreducible.   

\begin{table}
\caption{Character table for $D_{2n}$ ($n$ odd)}
\label{tab:1}
\begin{tabular}{| c | c | c | c |}
\hline
\ \ $\qquad \sigma \qquad $  &  $\quad e\quad $  &  $\qquad R^{\pm k}\qquad $   & $\qquad C\qquad $  \\ [0.5ex]
\hline
$\quad |C_\sigma|\quad$ & $\ \ 1\ \ $ & $\ \ 2\ \ $  & $\ \ n\ \ $ \\ [0.5ex]
\hline
$\chi_{triv}$ & 1 & $1$    & $\ \ \ 1$  \\
$\chi_{det}$ & $1$ & $1$ & $-1$  \\
$\pi_2$  & $2$ & \ \  $2\cos(2k\pi/n)$ \ \  & $\ \ \ 0$ \\ [0.5ex]
\hline
$\rho$ & $n$ & $0$ &\ \ \  $1$\\
\hline
\end{tabular}
\end{table}

\begin{table}
\caption{Character table for $D_{2n}$ ($n=2m$)}
\label{tab:1}
{\small
\begin{tabular}{| c | c | c | c | c | c | c |}
\hline
$\qquad \sigma \qquad$  & $\quad e \quad$ & $\quad R^m\quad $ & $\quad R^{\pm k}$ \ ($k$  odd) $\quad$  & $\quad R^{\pm k}$ ($k$ even)$\quad$ & $\qquad C \qquad$  & $\qquad CR \qquad$  \\ [0.5ex]
\hline
$|C_\sigma| $ &$1$ &  $1$  & $2$   &  $2$  & $m$  & $m$   \\ [0.5ex]
\hline
$\chi_{triv}$ & 1 & $1$  & $1$ & $1$  & $1$ & $1$ \\
$\chi_{det}$ & $1$ & $1$ & $1$ & $1$ &$-1$ &  $-1$  \\
$\chi_{sgn}$ & 1 & $(-1)^m$  & $-1$ & $1$ & $(-1)^m\ \ \ $  & $(-1)^{m+1}$  \\
$\chi_{det}\cdot \chi_{sgn}$ & $1$ & $(-1)^m$ & $-1$ & $1$ & $(-1)^{m+1}$ &  $(-1)^m$\ \ \  \ \\
$\pi_2$  & $2$ &  $-2 $ & $2\cos(2k\pi /n)$ & $2\cos(2k\pi/n)$ & $0$  & $0$ \\
\hline
$\rho$ & $n$ & $0$ & $0$ & $0$  & $0$ & 2\\
\hline
\end{tabular}
}
\end{table}

For an efficient tabulation of all two-dimensional irreducible representations for $D_{2n}$, the mappings 
$$\varphi_{j}: D_{2n}\rightarrow D_{2n},$$
$$\varphi_{j}(R)=R^j,\qquad \varphi_{j}(C)=C$$
define homomorphisms for $0\le j < n.$    For $1\le j < n$, define 
$$\pi^j_2=\pi_2 \circ \varphi_{j}.$$
Then the complete set $\widehat{G}$ of irreducible classes is given by the one-dimensional characters and the two-dimensional representations $\pi_2^j$  (with $1\le j \le \frac{n-1}2$ for $n$ odd, and with $1 \le j < m-1$ for $n=2m$).  

In Table 1, the character table for $n$ odd, we append each character for $\pi^j_2,$ which equals that of $\pi_2$, but with the replacement $2\cos(\frac{2jk\pi}{n})$. Of course, since the characters agree,   $\pi_2^j$ and $\pi_2^{-j}$ are equivalent as representations.  

To see the row orthogonality relations, one uses basic trigonometric identities with the following lemma and the corresponding identity for sine.

\begin{lemma}
Suppose $n\ge 2$ and  $j$ is not a multiple of $n$. Then 
$$\sum\limits_{k=0}^{n-1}\ \cos(2jk\pi/n) = 0.$$
\end{lemma}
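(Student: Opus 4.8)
The plan is to recognize the cosine sum as the real part of a geometric series over the $n$-th roots of unity, and then exploit the fact that the common ratio is a nontrivial root of unity to force the series to vanish.

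First I would set $\zeta = e^{2\pi i/n}$, a primitive $n$-th root of unity, and write $\cos(2jk\pi/n) = \mathrm{Re}\,(\zeta^{jk}) = \mathrm{Re}\,((\zeta^j)^k)$. Summing over $0 \le k \le n-1$ and pulling the real part outside the finite sum reduces the claim to showing
$$\sum_{k=0}^{n-1} (\zeta^j)^k = 0.$$

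The key step is the geometric series evaluation. Since $j$ is not a multiple of $n$, we have $\zeta^j \ne 1$: indeed $\zeta^j = 1$ precisely when $n \mid j$, because $\zeta$ has order exactly $n$. Hence the ratio $\zeta^j \ne 1$, and the standard closed form gives
$$\sum_{k=0}^{n-1}(\zeta^j)^k = \frac{(\zeta^j)^n - 1}{\zeta^j - 1} = \frac{\zeta^{jn} - 1}{\zeta^j - 1}.$$
The numerator vanishes since $\zeta^{jn} = (\zeta^n)^j = 1^j = 1$, so the whole sum equals $0$. Taking real parts yields the lemma.

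The main obstacle is essentially bookkeeping: the only place the hypothesis enters is in guaranteeing $\zeta^j \ne 1$, so that division by $\zeta^j - 1$ is legitimate and the geometric series collapses. An alternative, purely trigonometric argument—multiplying the sum by $2\sin(j\pi/n)$ and telescoping through the product-to-sum identity $2\sin(a)\cos(b) = \sin(a+b) - \sin(b-a)$—reaches the same conclusion, with the hypothesis now ensuring $\sin(j\pi/n) \ne 0$; but the roots-of-unity route is shorter and avoids the case analysis, and it simultaneously supplies the companion identity for sine (via the imaginary part) that the text invokes for the row orthogonality relations.
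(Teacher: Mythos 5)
Your proposal is correct and follows essentially the same route as the paper: both recognize the sum as the real part of $\sum_{k=0}^{n-1}\omega^k$ with $\omega = e^{2j\pi i/n}$ a nontrivial $n$-th root of unity and conclude that this geometric sum vanishes. The only cosmetic difference is that you spell out the geometric-series closed form, while the paper simply cites the standard identity $1+\omega+\cdots+\omega^{n-1}=0$ for $\omega\neq 1$.
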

\begin{proof}
Suppose $\omega \ne 1$ is an $n$-th root of unity.  Then 
$$1+\omega+\omega^2+\dots+ \omega^{n-1}=0.$$
Taking the real part of each term with $\omega=e^{2j\pi i/n}$, the lemma follows.\qquad \qed
\end{proof}

Now orthogonality relations for characters immediately give

\begin{proposition}   Suppose $n$ is odd.  As  representations of $D_{2n}$,
$$\rho\ \cong\ \chi_{triv}\ \oplus\  \bigoplus\limits_{j=1}^{\frac{n-1}2}\  \pi_2^j.$$
\end{proposition}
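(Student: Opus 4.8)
The plan is to decompose $\rho$ via the standard inner product on characters, using the fact that the multiplicity of an irreducible $\pi$ in $\rho$ is given by $\langle \chi_\rho, \chi_\pi\rangle = \frac{1}{|G|}\sum_{g\in G} \chi_\rho(g)\overline{\chi_\pi(g)}$. Since all characters here are real-valued, the conjugation is harmless. From Table~1, the only classes on which $\chi_\rho$ is nonzero are $\{e\}$ (with $\chi_\rho(e)=n$, class size $1$) and the reflection class $C_C$ (with $\chi_\rho(C)=1$, class size $n$); the rotation classes $\{R^{\pm k}\}$ all contribute $\chi_\rho(R^{\pm k})=0$. This sparsity is what makes the computation short, so the first step is simply to read off these three pieces of data.

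First I would compute the multiplicity of each one-dimensional character. For $\chi_{triv}$, summing over classes gives $\frac{1}{2n}\bigl(1\cdot n\cdot 1 + n\cdot 1\cdot 1\bigr) = \frac{1}{2n}(n+n)=1$, using that the reflection class has size $n$ and $\chi_{triv}(C)=1$. For $\chi_{det}$, the reflection contribution carries a sign $\chi_{det}(C)=-1$, giving $\frac{1}{2n}(n\cdot 1 + n\cdot(-1))=0$. So exactly one copy of $\chi_{triv}$ appears and no copy of $\chi_{det}$, consistent with $\rho$ fixing the all-ones vector and the $n$-gon's vertex-sum being preserved.

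Next I would handle the two-dimensional pieces $\pi_2^j$ for $1\le j\le \frac{n-1}{2}$. Here $\chi_{\pi_2^j}(e)=2$ and $\chi_{\pi_2^j}(C)=0$, so the reflection class contributes nothing and the rotation classes contribute nothing to $\langle \chi_\rho,\chi_{\pi_2^j}\rangle$ because $\chi_\rho$ vanishes there. The multiplicity is therefore $\frac{1}{2n}\bigl(n\cdot 2\bigr)=1$, so each $\pi_2^j$ occurs exactly once. The main (and only real) obstacle is bookkeeping: I must confirm that $\chi_{triv}$ together with the $\frac{n-1}{2}$ representations $\pi_2^j$ exhausts the full character of $\rho$, which is a dimension check --- $1 + 2\cdot\frac{n-1}{2} = n = \dim\rho$ --- and this closes the argument, since a representation is determined up to isomorphism by its character. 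Orthogonality of the irreducible characters (invoked here via the inner-product formula) is precisely the tool flagged in the paragraph preceding the statement, and Lemma~1 with its sine analogue is what underwrites those orthogonality relations for the rotation entries, so no further trigonometric summation is needed beyond what has already been established.
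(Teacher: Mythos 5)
Your proposal is correct and is exactly the paper's approach: the paper's proof consists of the single remark that ``orthogonality relations for characters immediately give'' the decomposition, and your inner-product computations $\langle\chi_\rho,\chi_{triv}\rangle=1$, $\langle\chi_\rho,\chi_{det}\rangle=0$, $\langle\chi_\rho,\chi_{\pi_2^j}\rangle=1$ from Table~1 simply make that remark explicit. The concluding dimension check $1+2\cdot\frac{n-1}{2}=n$ is a harmless extra confirmation, since you have already computed the multiplicity of every class in $\widehat{G}$.
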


Table 2 shows the character table when  $n=2m.$ Here the character for $\pi_2^j$ requires the change to  cosine as before and additionally the  $-2$ becomes $(-1)^j2$.   One also sees immediately that 
$$\pi_2^m\ \cong \chi_{sgn}\oplus \chi_{det}\cdot\chi_{sgn}.$$
\begin{proposition}   Suppose $n=2m.$  As  representations of $D_{2n}$,
$$\rho\ \cong\ \chi_{triv}\ \oplus\  \chi' \ \oplus \ \bigoplus\limits_{j=1}^{m-1}\  \pi_2^j,$$
where
$$\chi'=\chi_{sgn}\  (m\ odd)\quad or\quad   \chi'= \chi_{det}\cdot \chi_{sgn}\  (m \ even).$$
\end{proposition}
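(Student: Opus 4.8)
The plan is to compute the multiplicity of each irreducible from Table 2 in $\rho$ by the character inner product, and then confirm completeness by a dimension count. First I would record that, as a permutation representation, the character $\chi_\rho$ counts fixed points; reading the bottom row of Table 2, this character is supported on only two conjugacy classes, namely $\{e\}$ with value $n=2m$ and the reflection class $C_{CR}$ (of size $m$) with value $2$, vanishing elsewhere. Consequently every inner product
$$\langle \chi_\rho,\chi\rangle \;=\; \frac{1}{2n}\sum_{C_\sigma}\,|C_\sigma|\,\chi_\rho(\sigma)\,\overline{\chi(\sigma)}$$
collapses to at most the two terms coming from $e$ and $CR$.

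Next I would evaluate these two-term sums against each of the four one-dimensional characters. The contribution at $e$ is always $\tfrac{1}{4m}(2m)=\tfrac12$, so the multiplicity is governed by the reflection term $\tfrac{1}{4m}\,m\cdot 2\cdot\overline{\chi(CR)}=\tfrac12\,\chi(CR)$ (all values here being real). For $\chi_{triv}$ this gives $\tfrac12+\tfrac12=1$, and for $\chi_{det}$ it gives $\tfrac12-\tfrac12=0$. The decisive step is the pair $\chi_{sgn},\ \chi_{det}\cdot\chi_{sgn}$, whose values at $CR$ are $(-1)^{m+1}$ and $(-1)^{m}$: exactly one of these equals $1$ and the other $-1$, so precisely one of the two characters occurs with multiplicity $1$ while the other cancels. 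Tracking the sign shows $\chi_{sgn}$ survives when $m$ is odd and $\chi_{det}\cdot\chi_{sgn}$ survives when $m$ is even, which is exactly the definition of $\chi'$.

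For the two-dimensional pieces, each $\pi_2^j$ has trace $0$ on every reflection and trace $2$ at $e$, so its multiplicity is simply $\tfrac{1}{4m}(2m)\cdot 2=1$ for every $1\le j\le m-1$. Assembling these multiplicities yields the claimed sum $\chi_{triv}\oplus\chi'\oplus\bigoplus_{j=1}^{m-1}\pi_2^j$. Finally I would verify nothing is missing by comparing dimensions: the right-hand side has dimension $1+1+2(m-1)=2m=n=\dim\rho$, so the isomorphism is complete.

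I do not expect a serious obstacle; the only delicate point is the parity bookkeeping for the reflection contribution that selects $\chi_{sgn}$ versus $\chi_{det}\cdot\chi_{sgn}$, and this is already foreshadowed both by the remark $\pi_2^m\cong\chi_{sgn}\oplus\chi_{det}\cdot\chi_{sgn}$ and by the alternation of $\chi_{sgn}$ on the reflection classes noted in the example for $n$ even.
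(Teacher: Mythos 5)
Your proposal is correct and follows essentially the same route as the paper, which derives this proposition directly from the orthogonality relations for characters applied to Table 2 (the paper merely states ``orthogonality relations for characters immediately give'' the result, leaving the inner-product computations implicit). Your write-up simply makes explicit the two-term collapse of $\langle\chi_\rho,\chi\rangle$ at $e$ and $C_{CR}$ and the parity bookkeeping selecting $\chi'$, all of which is consistent with the paper's character table and intent.
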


\section{The Intertwining operator $\Phi_\rho$}
\label{sec:5}

We recall the definition of the group algebra for a finite group $G$.

\begin{definition}  Let $G$ be a finite group, and suppose $g_1,\dots, g_n$ are the elements of $G$.  Define $\mathbb{C}[G]$ to be the vector space of linear combinations over the formal basis 
$\{e_{g_1},\dots, e_{g_n}\}.$
We define multiplication by extending the multiplication of basis elements on indices:
$$e_{g}\cdot e_{g'} = e_{g\cdot g'}.$$
Note that $\dim_\mathbb{C}\mathbb{C}[G]=|G|.$
\end{definition}

If $(\pi, V_\pi)$ is a representation of $G$, then one extends $\pi$ to a homomorphism of associative algebras by
$$\Phi_\pi:  \mathbb{C}[G] \rightarrow Hom_\mathbb{C}(V_\pi, V_\pi),$$
$$\Phi_\pi(\sum x_i e_{g_i}) = \sum x_i \pi(g_i).$$
Both the kernel and image of $\Phi_\pi$ are subrepresentations of $G\times G$ in the domain and range, respectively, and one has the isomorphism of algebras
$$Im(\Phi_\pi)\ \cong \ \mathbb{C}[G]/ Ker\ \Phi_\pi.$$  
Here the corresponding actions of $G\times G$ are given by extending
$$(L\otimes R)(g_1, g_2) e_x = e_{g_1xg_2^{-1}},$$
$$(Hom(\pi, \pi)(g_1, g_2))M = \pi(g_1)M\pi(g_2^{-1}).$$

Additionally, we have the analogue of the Peter-Weyl theorem for finite groups:
\begin{proposition}
As representations of $G\times G$, 
$$\mathbb{C}[G]\  \cong\    \bigoplus\limits_{\pi'}\  Hom_\mathbb{C}(V_{\pi'}, V_{\pi'}),$$
where the sum ranges over the set $\widehat{G}$ of irreducible classes of representations for $G$.  Each summand on the right-hand side is irreducible for $G\times G.$
\end{proposition}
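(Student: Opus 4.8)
The plan is to assemble the individual algebra homomorphisms $\Phi_{\pi'}$ into a single map and show it realizes the stated isomorphism. Concretely, I would set
$$\Phi=\bigoplus_{\pi'}\Phi_{\pi'}:\ \mathbb{C}[G]\ \longrightarrow\ \bigoplus_{\pi'} Hom_\mathbb{C}(V_{\pi'},V_{\pi'}),$$
the sum taken over $\widehat{G}$. That $\Phi$ intertwines the two $G\times G$-actions is exactly the computation recorded just above the statement: for each $\pi'$ one has $\Phi_{\pi'}(e_{g_1 x g_2^{-1}})=\pi'(g_1)\,\Phi_{\pi'}(e_x)\,\pi'(g_2)^{-1}$, since $\pi'$ is a homomorphism, and the right side is precisely the $Hom(\pi',\pi')$-action. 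Hence $\Phi$ is a morphism of $G\times G$-representations, and it remains to check that it is bijective and that each target summand is irreducible.

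For the irreducibility of the summands, I would identify $Hom_\mathbb{C}(V_{\pi'},V_{\pi'})\cong V_{\pi'}\otimes V_{\pi'}^{*}$ as a $G\times G$-representation, with the first copy of $G$ acting on $V_{\pi'}$ through $\pi'$ and the second acting on the dual $V_{\pi'}^{*}$; tracing $M\mapsto \pi'(g_1)M\pi'(g_2)^{-1}$ on a rank-one map $v\otimes\phi$ shows the second factor transforms by the dual representation. Since $V_{\pi'}$ and $V_{\pi'}^{*}$ are each irreducible over $\mathbb{C}$, Schur's lemma gives $End_{G\times G}(V_{\pi'}\otimes V_{\pi'}^{*})\cong End_G(V_{\pi'})\otimes End_G(V_{\pi'}^{*})\cong\mathbb{C}$, so each external tensor product is irreducible for $G\times G$; distinct $\pi'$ plainly yield inequivalent summands.

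For bijectivity, the cleanest route is injectivity together with a dimension count. If $\Phi(\sum x_i e_{g_i})=0$, then $\sum x_i\pi'(g_i)=0$ for every irreducible $\pi'$, whence $\sum x_i\pi(g_i)=0$ for every representation $\pi$, as each decomposes into irreducibles by Maschke; applying this to the faithful regular representation and evaluating at $e_e$ returns $\sum x_i e_{g_i}=0$, forcing all $x_i=0$. Thus $\Phi$ is injective, and the identity $\sum_{\pi'}(\dim V_{\pi'})^2=|G|=\dim_\mathbb{C}\mathbb{C}[G]$ matches dimensions and upgrades injectivity to an isomorphism. I expect the main obstacle to be surjectivity: I deliberately avoid arguing it directly, where one would need to control the mutual independence of the components $\Phi_{\pi'}$, and instead lean on the dimension identity. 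Alternatively, one could bypass the bijectivity argument entirely by comparing $G\times G$-characters, since the character of $\mathbb{C}[G]$ at $(g_1,g_2)$ counts the $x$ with $g_1=xg_2x^{-1}$, while the right-hand side contributes $\sum_{\pi'}\chi_{\pi'}(g_1)\overline{\chi_{\pi'}(g_2)}$, and the column orthogonality relations show these agree.
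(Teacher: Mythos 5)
Your proof is correct, but note that the paper itself states this proposition \emph{without} proof: it is presented as the finite-group analogue of the Peter--Weyl theorem, with the background delegated to the cited texts of Fulton--Harris and Curtis--Reiner, so there is no in-paper argument to compare against, and your write-up supplies the standard one in full. Each step checks: $\Phi=\bigoplus_{\pi'}\Phi_{\pi'}$ intertwines because each $\pi'$ is a homomorphism; the identification $Hom_\mathbb{C}(V_{\pi'},V_{\pi'})\cong V_{\pi'}\otimes V_{\pi'}^{*}$ with the second factor carrying the dual action, together with $End_{G\times G}(V_{\pi'}\otimes V_{\pi'}^{*})\cong End_G(V_{\pi'})\otimes End_G(V_{\pi'}^{*})\cong\mathbb{C}$ and semisimplicity (Maschke), gives irreducibility of each summand; and injectivity via the regular representation plus a dimension count upgrades $\Phi$ to an isomorphism. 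One point you should make explicit to preempt a circularity worry: the identity $\sum_{\pi'}(\dim V_{\pi'})^2=|G|$ that you lean on is itself proved by decomposing the regular representation via character orthogonality (the multiplicity of $\pi'$ in $\mathbb{C}[G]$ is $\langle\chi_{reg},\chi_{\pi'}\rangle=\chi_{\pi'}(e)=\dim V_{\pi'}$), which is logically prior to and independent of the present proposition, so your use of it is legitimate. Your alternative character-theoretic route is also sound --- the trace of $(g_1,g_2)$ on $\mathbb{C}[G]$ counts $\#\{x : g_1=xg_2x^{-1}\}$, which equals $|C_G(g_1)|$ or $0$ according to whether $g_1$ and $g_2$ are conjugate, matching $\sum_{\pi'}\chi_{\pi'}(g_1)\overline{\chi_{\pi'}(g_2)}$ by column orthogonality --- but it yields only the $G\times G$-equivariant isomorphism, whereas your main argument shows $\Phi$ is simultaneously an isomorphism of associative algebras. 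That stronger form is what the paper actually exploits downstream (the summands as simple two-sided ideals, Schur's lemma applied to $Ker(\Phi_\pi)$ in Proposition 5, and the orthogonal idempotents of Section 9), so your primary argument is the preferable one to record.
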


Each irreducible subrepresentation for $G\times G$ corresponds to a simple two-sided ideal of the algebra structure, and the one-dimensional ideals in $\mathbb{C}[G]$ are directly obtained as follows: 

\begin{proposition}
Suppose $\chi:  G \rightarrow \mathbb{C}^*$ is a (one-dimensional) character of $G$.  Then 
$$|G| e_\chi = \sum\limits_{g\in G}  \chi(g^{-1})e_g$$
spans the subspace of $\mathbb{C}[G]$ correpsponding to $Hom_\mathbb{C}(\mathbb{C}_\chi, \mathbb{C}_\chi)$.
\end{proposition}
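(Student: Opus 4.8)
The plan is to work entirely with the $G\times G$-module structure, since the phrase ``the subspace corresponding to $Hom_\mathbb{C}(\mathbb{C}_\chi,\mathbb{C}_\chi)$'' refers to a summand in the Peter-Weyl decomposition of the preceding Proposition, which is an isomorphism of $G\times G$-representations. First I would record what this summand looks like abstractly. Because $\mathbb{C}_\chi$ is one-dimensional, every $M\in Hom_\mathbb{C}(\mathbb{C}_\chi,\mathbb{C}_\chi)$ is a scalar, and the action $Hom(\chi,\chi)(g_1,g_2)M=\chi(g_1)M\chi(g_2^{-1})=\chi(g_1g_2^{-1})M$ is just scalar multiplication. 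Thus this summand is the one-dimensional $G\times G$-representation on which $(g_1,g_2)$ acts by the scalar $\chi(g_1g_2^{-1})$. The task therefore reduces to locating, inside $\mathbb{C}[G]$, the subspace on which $G\times G$ acts through this same character.

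Concretely, I would solve the eigenvector equation $(L\otimes R)(g_1,g_2)\,v=\chi(g_1g_2^{-1})\,v$ for all $g_1,g_2\in G$. Writing $v=\sum_x c_x e_x$ and using $(L\otimes R)(g_1,g_2)e_x=e_{g_1xg_2^{-1}}$, I would reindex by $y=g_1xg_2^{-1}$ to obtain $\sum_y c_{g_1^{-1}yg_2}\,e_y$, then compare coefficients. Specializing $y=e$ gives $c_{g_1^{-1}g_2}=\chi(g_1g_2^{-1})\,c_e$; setting $g=g_1^{-1}g_2$ and using that $\chi$ is a homomorphism turns $\chi(g_1g_2^{-1})$ into $\chi(g)^{-1}=\chi(g^{-1})$, so every coefficient is forced to be $c_g=\chi(g^{-1})\,c_e$. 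Normalizing $c_e=1$ yields $v=\sum_g\chi(g^{-1})e_g=|G|\,e_\chi$; since the coefficients are determined up to the single scalar $c_e$, the eigenspace is one-dimensional and is spanned by this vector, matching $\dim Hom_\mathbb{C}(\mathbb{C}_\chi,\mathbb{C}_\chi)=1$. As the Peter-Weyl summand also transforms by $\chi(g_1g_2^{-1})$ and has the same dimension, the two coincide.

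Finally I would confirm sufficiency by a direct check that the candidate really does transform correctly: applying $(L\otimes R)(g_1,g_2)$ to $v=\sum_g\chi(g^{-1})e_g$ and reindexing gives $\sum_y\chi((g_1^{-1}yg_2)^{-1})e_y=\sum_y\chi(g_2^{-1}y^{-1}g_1)e_y$, whereupon the homomorphism property of $\chi$ factors out $\chi(g_1g_2^{-1})$ and recovers $\chi(g_1g_2^{-1})v$. The only genuine subtlety, and the step I would be most careful about, is the bookkeeping of inverses: one must verify that the left/right translation action produces the scalar $\chi(g_1g_2^{-1})$ (not $\chi(g_1^{-1}g_2)$) and that this is exactly the scalar appearing on the $Hom$ side, so that the eigenvector lands in the Peter-Weyl summand attached to $\chi$ rather than to some other character. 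Everything else is routine reindexing.
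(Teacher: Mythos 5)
Your argument is correct, and the bookkeeping of inverses --- the one place where such a computation typically goes wrong --- checks out: the coefficient comparison $c_{g_1^{-1}yg_2}=\chi(g_1g_2^{-1})c_y$ at $y=e$ does force $c_g=\chi(g^{-1})c_e$ (note this uses commutativity of $\mathbb{C}^*$ to rewrite $\chi(g_1g_2^{-1})$ as $\chi((g_1^{-1}g_2)^{-1})$, which your appeal to the homomorphism property covers), and your sufficiency check correctly factors out $\chi(g_1g_2^{-1})$. For comparison: the paper states this proposition without proof, treating it as standard --- implicitly it is the classical fact that $e_\chi$ is a primitive central idempotent, usually established via the multiplicative computation $e_{g_1}\,e_\chi\,e_{g_2^{-1}}=\chi(g_1g_2^{-1})\,e_\chi$ together with the multiplicity-one statement built into the preceding Peter--Weyl proposition. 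Your route is slightly different and in one respect stronger: by solving the eigenvector equation for the character $(g_1,g_2)\mapsto\chi(g_1g_2^{-1})$ directly inside $\mathbb{C}[G]$, you prove that the full $\chi\boxtimes\chi^{-1}$-isotypic subspace is exactly one-dimensional, rather than importing that multiplicity from the abstract decomposition; the only external input you need is that the Peter--Weyl summand $Hom_\mathbb{C}(\mathbb{C}_\chi,\mathbb{C}_\chi)$ transforms by this same character, which you verify. This makes your proof self-contained where the paper's is a citation, at the cost of a little reindexing that the idempotent computation avoids.
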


Next Schur's Lemma now implies that there exists a subset $X_\pi$ of the set  $\widehat{G}$ of irreducible classes such that
$$Ker(\Phi_\pi) =  \bigoplus\limits_{\pi'\in X_\pi} Hom_\mathbb{C}(V_{\pi'}, V_{\pi'}),$$
$$Im(\Phi_\pi) =  \bigoplus\limits_{\pi'\notin X_\pi} Hom_\mathbb{C}(V_{\pi'}, V_{\pi'}).$$

From Propositions 1 and 2, we immediately obtain for the permutation representation $\Phi_\rho$ for $D_{2n}$:

\begin{proposition}
(a) If $n$ is odd, then $X_\rho=\{\chi_{det}\}$. That is,
$$Ker(\Phi_\rho) = \mathbb{C}e_{\chi_{det}}.$$

(b)  If $n=2m$, then $X_\rho=\{\chi_{det}, \chi''\},$ where
$$\chi''=\chi_{sgn}\  (m\ even)\quad or \quad   \chi'' =\chi_{det}\cdot \chi_{sgn} \ (m \ odd).$$
That is,
$$Ker(\Phi_\rho) = \mathbb{C}e_{\chi_{det}} \oplus \mathbb{C}e_{\chi''}.$$
\end{proposition}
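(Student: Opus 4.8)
The plan is to pin down the set $X_\rho$ explicitly by combining the abstract description of $Ker(\Phi_\pi)$ already furnished by Schur's Lemma with the decompositions in Propositions 1 and 2. The organizing principle I would isolate first is that an irreducible class $\pi'$ lies in $X_\pi$ precisely when $\pi'$ fails to occur in the decomposition of $\pi$. Indeed, $Im(\Phi_\pi)$ sits inside $Hom_\mathbb{C}(V_\pi, V_\pi)$, and the restriction of $\Phi_\pi$ to a simple two-sided ideal $Hom_\mathbb{C}(V_{\pi'}, V_{\pi'})$ is injective when $\pi'$ is a constituent of $\pi$ and is identically zero otherwise; this is exactly the dichotomy supplied by Schur's Lemma that produces the subset $X_\pi$ in the preceding discussion. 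Thus computing $Ker(\Phi_\rho)$ reduces to listing the classes of $\widehat{G}$ absent from $\rho$ and then invoking the one-dimensional ideal formula of Proposition 4.

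For part (a), I would read off from Proposition 1 that, for $n$ odd, $\rho$ contains $\chi_{triv}$ together with every two-dimensional representation $\pi_2^j$ for $1\le j\le\frac{n-1}2$. Setting this against the full list $\widehat{G}=\{\chi_{triv},\chi_{det},\pi_2^1,\dots,\pi_2^{(n-1)/2}\}$, the single missing class is $\chi_{det}$, so $X_\rho=\{\chi_{det}\}$. Since $\chi_{det}$ is one-dimensional, Proposition 4 identifies the corresponding ideal with the line $\mathbb{C}e_{\chi_{det}}$, which is the claimed kernel.

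For part (b) with $n=2m$, the same comparison needs slightly more care because of the parity split in Proposition 2. There $\rho$ contains $\chi_{triv}$, all $\pi_2^j$ for $1\le j\le m-1$, and exactly one of the two remaining linear characters, namely $\chi'=\chi_{sgn}$ when $m$ is odd and $\chi'=\chi_{det}\cdot\chi_{sgn}$ when $m$ is even. Removing these from $\widehat{G}=\{\chi_{triv},\chi_{det},\chi_{sgn},\chi_{det}\cdot\chi_{sgn},\pi_2^1,\dots,\pi_2^{m-1}\}$ leaves $\chi_{det}$, which is absent for every $m$, together with the sign-type character distinct from $\chi'$. Reading off the parity, this missing character $\chi''$ equals $\chi_{sgn}$ when $m$ is even and $\chi_{det}\cdot\chi_{sgn}$ when $m$ is odd, reversing the condition that defines $\chi'$. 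Hence $X_\rho=\{\chi_{det},\chi''\}$, and applying Proposition 4 to each of these one-dimensional classes yields $Ker(\Phi_\rho)=\mathbb{C}e_{\chi_{det}}\oplus\mathbb{C}e_{\chi''}$.

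No genuine obstacle arises, since the result is advertised as immediate; the only point I would watch is the bookkeeping in part (b), where complementation inside $\widehat{G}$ interchanges the roles of the two sign characters relative to Proposition 2. I would state the parity conditions carefully so that $\chi'$ (present in $\rho$) and $\chi''$ (absent, hence contributing to the kernel) are never conflated.
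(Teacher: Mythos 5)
Your proposal is correct and takes essentially the same approach as the paper: the paper obtains the proposition ``immediately'' from Propositions 1 and 2 together with the preceding Schur's Lemma decomposition, which is exactly the dichotomy you make explicit (the simple ideal $Hom_\mathbb{C}(V_{\pi'},V_{\pi'})$ maps injectively or to zero according as $\pi'$ does or does not occur in $\rho$), followed by the same complementation in $\widehat{G}$. Your parity bookkeeping in part (b), with $\chi'$ present in $\rho$ and $\chi''$ the complementary sign character contributing to the kernel, agrees with Propositions 2 and 5 as stated.
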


\section{Semi-magic Matrices}
\label{sec:6}

Closely related to permutation  representations are semi-magic matrices. We recall basic properties about these matrices, with attention to properties from the previous section.

\begin{definition}
An element $M$ of $M(n, \mathbb{C})$ is called a {\bf semi-magic matrix} if the sums along each row and column are equal.  This common sum $r_M$ is called the {\bf line sum} of $M$. Denote the vector space of semi-magic matrices of size $n$ by $MM(n).$
\end{definition}

\begin{example}
Every permutation matrix is a magic matrix with line sum 1, and in fact the set of all permutation matrices form a spanning set of $MM(n)$.  The line sum of a linear combination of permutation matrices is evident:
$$M=\sum x_\sigma P_\sigma\ \   \mapsto\ \  r_M=\sum x_\sigma.$$
\end{example}

\begin{example}
Consider the element $J$ in $MM(n)$ with all entries equal to one.  Then one sees immediately that the line sum of $J$ is $n$, and, for all $M$ in $MM(n)$ with line sum $r$,
$$JM=MJ=rJ,$$
so that $J$ is in the center of $MM(n)$.  In particular, $J^2=nJ.$
\end{example}
In general, 
\begin{proposition}  If $M_1$ and $M_2$ are in $MM(n)$ with line sums $r_1$ and $r_2$ then so is $M_1M_2$ with line sum $r_1r_2$.
\end{proposition}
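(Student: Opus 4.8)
The plan is to reduce the statement to a one-line computation in matrix algebra by encoding the semi-magic property through left and right multiplication by the all-ones matrix $J$ from the preceding example. Since the entries of $JM$ are exactly the column sums of $M$ and the entries of $MJ$ are exactly the row sums of $M$, a matrix $N$ lies in $MM(n)$ with line sum $s$ if and only if
\[
JN = NJ = sJ.
\]
The forward direction of this equivalence is recorded in the example introducing $J$; the reverse direction is immediate from reading off the column and row sums directly from the entries of $JN$ and $NJ$.

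First I would restate the hypotheses in this form, namely $JM_1 = M_1J = r_1J$ and $JM_2 = M_2J = r_2J$. Then, using only associativity of matrix multiplication and the fact that scalars pull out of products, I would compute
\[
J(M_1M_2) = (JM_1)M_2 = r_1\,(JM_2) = r_1r_2\,J
\]
and, symmetrically,
\[
(M_1M_2)J = M_1(M_2J) = r_2\,(M_1J) = r_1r_2\,J.
\]
By the equivalence above, these two identities show at once that $M_1M_2 \in MM(n)$ with line sum $r_1r_2$.

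In truth there is no serious obstacle here: once the semi-magic condition is phrased via $J$, the conclusion follows from associativity alone. The only points meriting a word of care are the converse direction of the opening equivalence --- that the single two-sided relation $JN = NJ = sJ$, with one common scalar $s$, really does recover the semi-magic property --- and the harmless consistency check that any matrix whose rows all share a common sum and whose columns all share a common sum must in fact share the \emph{same} constant, since each equals the total of all entries divided by $n$. An equally short alternative replaces $J$ by the all-ones column vector, writing the conditions as $M\mathbf{1} = r\mathbf{1}$ and $\mathbf{1}^{T}M = r\mathbf{1}^{T}$ and running the identical associativity argument; I would choose whichever form sits better with the surrounding notation.
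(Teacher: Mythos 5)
Your proof is correct and is essentially the paper's own argument in a trivially different dress: the paper encodes the semi-magic condition as the eigenvector identities $Me = M^{T}e = re$ for the all-ones vector $e$ and then applies associativity exactly as you do with $J$ (note $J = ee^{T}$, so your two-sided relation $JN = NJ = sJ$ is equivalent to the paper's pair of vector conditions). Indeed, the ``equally short alternative'' you mention at the end is literally the proof given in the paper, so there is nothing to add beyond your own careful remarks on the converse direction and the common-scalar consistency check.
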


\begin{proof}  Let $e=(1, \dots, 1)$ be in $\mathbb{C}^n.$ That $M$ has the magic property with line sum $r$ is equivalent to the eigenvector condition
$$Me=M^Te=re.$$
Then 
$$M_1M_2e=r_2M_1e=r_1r_2e$$
and
$$(M_1M_2)^Te=M_2^TM_1^Te=r_1M_2^Te=r_2r_1e.\qquad \qed$$
\end{proof}

Thus $MM(n)$ is an associative algebra over $\mathbb{C}$, and the line sum functional is a linear character with respect to the algebra structure.  Furthermore, the corresponding map 
$$\Phi: \mathbb{C}[S_n]\rightarrow MM(n)$$
is a surjective intertwining operator between representations of $S_n\times S_n$ as before, and it is not difficult to see that the decomposition into irreducible subrepresentations is given by 
$$MM(n)\ \cong \mathbb{C}J\ \oplus MM_0(n),$$
where 
$$MM_0(n)=\{M\in MM(n)\ |\ JM=0\}$$ is the simple two-sided ideal in $MM(n)$ where all elements have line sum equal to zero.  Thus, for the center of $MM(n),$ 
$$Z(MM(n)) =Span_\mathbb{C}(I_n, J),$$
where $I_n$ is the identity matrix of size $n$.

More useful for what follows, the weaker statement that products of linear combinations are closed follows from the closure property of group multiplication:
$$(\sum x_\sigma P_\sigma)(\sum y_\tau P_\tau)=\sum x_\sigma y_\tau P_{\sigma\tau}.$$
With this note, the next two definitions merely recast the permutation mapping from the previous section.

\begin{definition}
Suppose $G$ is a subgroup of $S_n.$  Denote by $MM(G)\subseteq MM(n)$ the associative algebra over $\mathbb{C}$ generated by the elements of $G$ as permutation matrices. We call $MM(G)$ the {\bf permutation algebra} associated to $G.$  This algebra depends on the embedding of $G$ in $S_n.$
\end{definition}

\begin{definition}  The intertwining operator
$$\Phi_\rho: \mathbb{C}[G] \rightarrow MM(G)\subset MM(n),$$
$$\sum x_\sigma e_\sigma\ \  \mapsto\ \  \sum x_\sigma P_\sigma$$
is a surjection between both $G\times G$ representations and associative algebras over $\mathbb{C}$.  
With respect to either structure,
$$MM(G)\ \cong \mathbb{C}[G]/Ker(\Phi_\rho).$$
Additionally, $\Phi_\rho$ carries coefficient sums to line sums.
\end{definition}

\section{Semi-magic Squares for $MM(D_{2n})$}
\label{sec:7}

We now consider the problem of enumerating certain types of semi-magic squares.

\begin{definition}  A semi-magic matrix $M$ is called a {\bf semi-magic square} if it has entries in the non-negative integers.  Alternatively, $M$ may be written as
$$M=\sum n_\sigma P_\sigma$$
where each $n_\sigma$ is a non-negative integer. In this case, the line sum of $M$ is $\sum n_\sigma$.
\end{definition}

\begin{definition}  Let $H_G(r)$ denote the number of semi-magic squares in the permutation algebra $MM(G)$ with line sum equal to $r$.
\end{definition}

Traditionally $H_n(r)$ is used to denote the number of all semi-magic squares of size $n$ with line sum equal to $r$.  Problems of this type go back to MacMahon (1916) for the case of size three.
We base our results for  $H_{D_{2n}}(r)$ on the model found in \cite{StE}, p. 225, Problem 15 for the case of $n=3.$

\begin{theorem}
Suppose $n$ is odd and  $H_{D_{2n}}(r)$ counts the number of semi-magic squares in $MM(D_{2n})$ with line sum $r$.  Then 
$$H_{D_{2n}}(r) = \left(\begin{array}c  r+2n-1 \\ 2n-1  \end{array}\right) - \left(\begin{array}c  r+n-1 \\ 2n-1\end{array}\right),$$
which has generating function
$$F_n(x)=\sum\limits_{r\ge 0}  H_{D_{2n}}(r) x^r = \frac{1-x^n}{(1-x)^{2n}}.$$
\end{theorem}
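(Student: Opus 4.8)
The plan is to reduce the enumeration of distinct matrices to an orbit count governed by the single linear relation identified in Proposition 5(a), and then to count orbit representatives by a complementary (inclusion--exclusion) argument, exactly in the spirit of the Stanley model cited for $n=3$.

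First I would make the kernel relation explicit. A semi-magic square in $MM(D_{2n})$ is a non-negative integer combination
$$M = \sum_{k=0}^{n-1} a_k\,P_{R^k} + \sum_{k=0}^{n-1} b_k\,P_{CR^k},\qquad a_k,b_k\in\mathbb{Z}_{\ge 0},$$
with line sum $r=\sum_k a_k+\sum_k b_k$. By Proposition 5(a), $\ker\Phi_\rho=\mathbb{C}\,e_{\chi_{det}}$; since $\chi_{det}$ is real-valued, Proposition 4 gives the spanning kernel vector $v=\sum_k e_{R^k}-\sum_k e_{CR^k}$, whose image relation is $\sum_k P_{R^k}=\sum_k P_{CR^k}\,(=J)$. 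Writing $v=(\mathbf 1,-\mathbf 1)\in\mathbb{Z}^{2n}$ and noting $\mathbb{C}v\cap\mathbb{Z}^{2n}=\mathbb{Z}v$, two admissible tuples $(a,b)$ and $(a',b')$ yield the same matrix precisely when they differ by an integer multiple of $v$. Thus $H_{D_{2n}}(r)$ equals the number of orbits of
$$\{(a,b)\in\mathbb{Z}_{\ge 0}^{2n}:\ \textstyle\sum_k a_k+\sum_k b_k=r\}$$
under the $\mathbb{Z}$-action $t\cdot(a,b)=(a+t\mathbf 1,\,b-t\mathbf 1)$, which visibly preserves the line sum.

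Next I would select canonical orbit representatives. Within any orbit, shifting by $t=\min_k b_k$ produces an admissible tuple with $\min_k b_k=0$, and this representative is unique, since applying any nonzero shift $t$ changes the reflection-minimum by $-t$. Hence $H_{D_{2n}}(r)$ counts tuples $(a,b)\in\mathbb{Z}_{\ge0}^{2n}$ with $\sum_k a_k+\sum_k b_k=r$ and at least one $b_k=0$. I would then count by complementation: the number of all such tuples is ${r+2n-1\choose 2n-1}$, while those with every $b_k\ge 1$ (substitute $b_k\mapsto b_k+1$, reducing the line sum by $n$) number ${r+n-1\choose 2n-1}$. Subtracting yields
$$H_{D_{2n}}(r)={r+2n-1\choose 2n-1}-{r+n-1\choose 2n-1}.$$

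Finally, for the generating function I would use $\sum_{r\ge0}{r+2n-1\choose 2n-1}x^r=(1-x)^{-2n}$; reindexing the second series by $r\mapsto r+n$ (its terms vanish for $r<n$) gives $\sum_{r\ge0}{r+n-1\choose 2n-1}x^r=x^n(1-x)^{-2n}$, so
$$F_n(x)=\frac{1}{(1-x)^{2n}}-\frac{x^n}{(1-x)^{2n}}=\frac{1-x^n}{(1-x)^{2n}}.$$
The main obstacle is the first step: one must be sure that the kernel relation of Proposition 5(a) accounts for \emph{all} coincidences among the matrices and that it contributes exactly the integer lattice $\mathbb{Z}v$, so that counting distinct matrices is genuinely an orbit count. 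The uniqueness of the $\min_k b_k=0$ representative is what then converts this orbit count into the clean complementary enumeration, after which the remaining steps are routine binomial bookkeeping.
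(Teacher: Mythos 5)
Your proposal is correct and follows essentially the same route as the paper's proof: both use the single relation $\sum_k P_{R^k}=\sum_k P_{CR^k}$ from Proposition 5(a) to select canonical representatives with at least one zero among the reflection coordinates, count by complementation to get $\binom{r+2n-1}{2n-1}-\binom{r+n-1}{2n-1}$, and finish with the binomial series. Your explicit verification that the kernel contributes exactly the lattice $\mathbb{Z}v$ (since $v=(\mathbf 1,-\mathbf 1)$ is primitive) and that the $\min_k b_k=0$ representative is unique is a welcome sharpening of a step the paper leaves implicit, but it is not a different argument.
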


\begin{proof}
Since the permutation matrices form a spanning set for $MM(D_{2n})$, we may identify each linear combination in $MM(D_{2n})$ with a $2n$-tuple of complex numbers, ordered so that the rotation elements precede the reflection elements.  That is,
$$\sum x_i P_{\sigma_i} \mapsto (x_1, \dots, x_{2n}).$$
By Proposition 5, the non-uniqueness of this representation is captured by the single dependence relation from $\chi_{det}$:
$$\sum\limits_{\sigma} \chi_{det}(\sigma) P_\sigma = 0\qquad or\qquad \sum\limits_{k} P_{R^k} = \sum\limits_{k} P_{CR^k} \ \ (=J).$$ 
In terms of $2n$-tuples, this relation presents as
$$(x_1+1,\ \dots,\ x_n+1,\ x_{n+1},\ \dots,\ x_{2n}) = (x_1, \ \dots,\ x_n,\ x_{n+1}+1,\ \dots,\ x_{2n}+1).$$
Thus each semi-magic square with line sum $r$ is uniquely represented by a $2n$-tuple of non-negative integers such that
\begin{itemize}
\item the entries sum to $r,$ and
\item at least one value in the last $n$ entries is equal to zero.
\end{itemize}
The first term of $H_{D_{2n}}(r)$ counts the number of way to place $r$ balls in $2n$ boxes (weak compositions of $r$ into $2n$ parts);  to guarantee a vanishing entry, we discard $2n$-tuples in which the last $n$ entries are nonzero, or of the form
$$(x_1,\ \dots,\ x_n,\ 1+x_{n+1},\ \dots,\ 1+ x_{2n})$$
where the $x_i$ are nonnegative integers and 
$$\sum\limits_{i=1}^{2n} x_i = r-n.$$
Finally, with $s\ge 1,$ the generating function follows from the binomial series
$$\frac{1}{(1-x)^s}= \sum\limits_{r\ge 0}  \left(\begin{array}{c} r+s-1 \\ s-1 \end{array}\right)  x^r.\qquad \qed$$ 
\end{proof}

\begin{theorem}
Supoose $n=2m$ and  $H_{D_{2n}}(r)$ counts the number of semi-magic squares in $MM(D_{2n})$ with line sum $r$.  Then 
$$H_{D_{2n}}(r) = \left(\begin{array}{c} r+4m-1 \\ 2n-1  \end{array}\right)-2\left(\begin{array}{c} r+3m-1 \\ 2n-1\end{array}\right)  + \left(\begin{array}{c} r+2m-1 \\ 2n-1\end{array}\right),$$
which has generating function
$$F_n(x)=\sum\limits_{r\ge 0}  H_{D_{2n}}(r) x^ r = \frac{(1-x^m)^2}{(1-x)^{2n}}.$$
\end{theorem}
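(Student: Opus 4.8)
The plan is to mimic the proof of the odd case, now using the two-dimensional kernel from Proposition 5(b). First I would identify each element of $MM(D_{2n})$ with a $2n$-tuple $(a_0,\dots,a_{n-1},b_0,\dots,b_{n-1})$, where $a_k$ is the coefficient of $P_{R^k}$ and $b_k$ that of $P_{CR^k}$. The ambiguity in this representation is governed by $Ker(\Phi_\rho)=\mathbb{C}e_{\chi_{det}}\oplus\mathbb{C}e_{\chi''}$, so I must translate each idempotent into an explicit dependence relation $\sum_\sigma\chi(\sigma^{-1})P_\sigma=0$. Since every relevant character is real-valued in $\{\pm1\}$, this reads $\sum_\sigma\chi(\sigma)P_\sigma=0$. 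For $\chi_{det}$ this recovers $\sum_k P_{R^k}=\sum_k P_{CR^k}$ as before; the point of choosing $\chi''$ according to the parity of $m$ is that in both cases one computes $\chi''(R^k)=\chi''(CR^k)=(-1)^k$, so the second relation is uniformly $\sum_k(-1)^k\bigl(P_{R^k}+P_{CR^k}\bigr)=0$.

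The key simplification is to replace this pair of relations by their sum and difference. Writing $A_{\mathrm{even}}=\sum_{k\ \mathrm{even}}P_{R^k}$ and similarly $A_{\mathrm{odd}},B_{\mathrm{even}},B_{\mathrm{odd}}$, the two relations are equivalent to
$$A_{\mathrm{even}}=B_{\mathrm{odd}},\qquad A_{\mathrm{odd}}=B_{\mathrm{even}}.$$
The corresponding kernel vectors have disjoint supports: the first is supported on the $n$ coordinates consisting of even-indexed rotations and odd-indexed reflections (call this Group I), the second on the odd-indexed rotations and even-indexed reflections (Group II), each of size $2m=n$. Hence the $2n$-tuple decouples into two independent blocks, and within each block the relation has exactly the shape of the single odd-case relation with $n$ replaced by $m$: incrementing all $m$ coordinates on one side by $t$ equals incrementing all $m$ coordinates on the other side by $t$.

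From here the counting proceeds exactly as in Theorem 1, now applied twice. In each block the coordinate sum is invariant (both kernel vectors sum to zero), and each coset has a unique nonnegative integer representative in which at least one reflection coordinate of that block vanishes; uniqueness follows because adding a nonzero multiple of the kernel vector strictly changes those reflection coordinates in one direction. Thus a semi-magic square with line sum $r$ corresponds to a nonnegative integer $2n$-tuple summing to $r$ in which at least one odd-indexed reflection coordinate and at least one even-indexed reflection coordinate equal zero. Inclusion--exclusion on these two vanishing conditions then gives
$$\left(\begin{array}{c} r+4m-1 \\ 2n-1 \end{array}\right)-2\left(\begin{array}{c} r+3m-1 \\ 2n-1 \end{array}\right)+\left(\begin{array}{c} r+2m-1 \\ 2n-1 \end{array}\right),$$
where each subtracted term forces all $m$ reflections of a given parity to be positive (shifting the sum down by $m$) and the last term forces all $2m$ reflections positive. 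Equivalently, since the two blocks are independent and each is a copy of the odd-case generating function $\tfrac{1-x^m}{(1-x)^{2m}}$, the generating function is the product
$$\left(\frac{1-x^m}{(1-x)^{2m}}\right)^{2}=\frac{(1-x^m)^2}{(1-x)^{2n}}.$$

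The main obstacle I anticipate is the bookkeeping in the second and third steps: correctly evaluating $\chi''$ on all group elements for each parity of $m$, and verifying that the sum and difference of the two kernel relations produce the clean, disjoint-support relations $A_{\mathrm{even}}=B_{\mathrm{odd}}$ and $A_{\mathrm{odd}}=B_{\mathrm{even}}$. Once this decoupling into two independent blocks is established, the remainder is a direct reprise of the odd-case argument carried out twice.
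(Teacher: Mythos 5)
Your proposal is correct and follows essentially the same route as the paper: the same $2n$-tuple model, the same reduction of the two kernel relations (from $\chi_{det}$ and $\chi''$) to the disjoint-support equalities $\sum P_{R^{2k}}=\sum P_{CR^{2k+1}}$ and $\sum P_{R^{2k+1}}=\sum P_{CR^{2k}}$, the same decoupling into two independent blocks each replaying the odd case with $m$ in place of $n$, and the same product (discrete convolution) of two copies of $\frac{1-x^m}{(1-x)^{2m}}$. The only cosmetic difference is that you extract the three-term binomial formula directly by inclusion--exclusion on the two vanishing conditions, whereas the paper reads it off from the squared generating function; your explicit verification that $\chi''(R^k)=\chi''(CR^k)=(-1)^k$ in both parities of $m$ is a welcome detail the paper leaves implicit.
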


\begin{proof}
The approach is essentially the same as the previous theorem, except now Proposition 5 yields two relations
$$\sum \chi_{det}(\sigma) P_\sigma =0\qquad and\qquad \sum \chi''(\sigma) P_\sigma = 0,$$
which, noting the sign for $r^m$, reduce to the equalities 
$$\sum P_{R^{2k}} = \sum P_{CR^{2k+1}} \ \ (=J_1),\qquad  \sum P_{R^{2k+1}} = \sum P_{CR^{2k}}\ \ (=J_2).$$
Here $J_1$ is a matrix of alternating ones and zeros, starting with a one in the upper left corner;  a similar pattern holds for $J_2$ but begins with a zero in this corner.   

If we order the group elements as above with four sections, we are now counting as before but with at least one zero entry in both the second and fourth sections.  If $r=r_1+r_2$, then the distribution of $r_1$ to the first $n$ entries and $r_2$ to the last $n$ entries is independent, so that
$$H_{D_{2n}}(r) = \sum\limits_{r_1=0}^r H_{D_n}(r_1)H_{D_n}(r-r_1),$$
Noting that this sum is a discrete convolution, we obtain the generating function by squaring that of $H_{D_{n}}(r),$  from which we obtain the counting formula. \qquad \qed
\end{proof}
The equalities with $J_1$ and $J_2$ are directly seen using circulant matrices for rotations and reflections, recalling that multiplication by $P_C$ on the left inverts columns.   Also note that
$$J=J_1+J_2,\quad J_1^2=mJ_1, \quad J_2^2=mJ_1, \quad J_1J_2=J_2J_1=mJ_2.$$

Finally, the following corollary gives the analogue of the Anand-Dumir-Gupta conjecture for $D_{2n}$ after Stanley.  

\begin{corollary}  Suppose $G=D_{2n}\subset S_n$.  The following properties of $H_G(r)$ hold:
\begin{itemize}
\item for $n$ odd (resp. even), $H_G(r)$ is a polynomial of degree $2n-2$ (resp. $2n-3$),
\item $H_{G}(-r) = (-1)^{n-1}H_{G}(r-n),$
\item $H_{G}(-1)=H_{G}(-2)=\dots=H_{G}(-n+1)=0,$ and 
\item the coefficients of the  numerator $h^*(x)$ in the reduced generating function are positive, symmetric, and unimodal. 
\end{itemize}
\end{corollary}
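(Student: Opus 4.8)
The plan is to reduce both generating functions to lowest terms, so that the Ehrhart numerator $h^*(x)$ and the correct denominator power become visible, and then read off all four bullets either from standard facts about rational generating functions or from a single functional equation. First I would cancel the factor $(1-x)$. Since $1 - x^n = (1-x)(1 + x + \cdots + x^{n-1})$, for $n$ odd
$$F_n(x) = \frac{1 + x + \cdots + x^{n-1}}{(1-x)^{2n-1}},$$
while for $n = 2m$, using $1 - x^m = (1-x)(1 + x + \cdots + x^{m-1})$,
$$F_n(x) = \frac{(1 + x + \cdots + x^{m-1})^2}{(1-x)^{2n-2}}.$$
In each case $h^*(x)$ has degree strictly less than the denominator power $d$ ($d = 2n-1$ and $d = 2n-2$, respectively), so expanding by the binomial series used in the previous proof writes $H_G(r)$ as a finite $\mathbb{C}$-combination of the polynomials $\left(\begin{array}{c} r-k+d-1 \\ d-1 \end{array}\right)$. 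Hence $H_G(r)$ is a genuine polynomial in $r$ of degree $d-1$, namely $2n-2$ (odd) and $2n-3$ (even), with leading coefficient $h^*(1)/(d-1)!$; since $h^*(1) = n$ and $h^*(1) = m^2$ are nonzero, the degree is exact. This settles the first bullet.

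For reciprocity and vanishing I would establish a single functional equation by direct substitution. Using $(x-1)^{2n} = (1-x)^{2n}$ together with $1 - x^{-n} = -x^{-n}(1-x^n)$ (and the analogous identity for the squared numerator), a short computation gives, in both parities,
$$F_n(1/x) = (-1)^n x^n F_n(x).$$
The second ingredient is the standard fact that a rational function whose only pole is at $x=1$ has Laurent expansion $-\sum_{r \le -1} H(r)\,x^r$ at $x = \infty$, where $H$ is the same polynomial producing $\sum_{r\ge 0} H(r)\,x^r$ at $x = 0$ (already known to be a polynomial from the first bullet). Expanding both sides of the functional equation at $x = \infty$ and matching the coefficient of $x^s$ then yields, for $s \le 0$, the identity $H(r) = (-1)^{n-1}H(-r-n)$; reindexing $r \mapsto r-n$ converts this to $H_G(-r) = (-1)^{n-1}H_G(r-n)$, the second bullet. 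The same comparison for $1 \le s \le n-1$, where the left-hand side contributes nothing while the right contributes $(-1)^{n+1}H_G(s-n)$, forces $H_G(s-n) = 0$, that is, $H_G(-1) = \cdots = H_G(-(n-1)) = 0$, the third bullet.

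For the last bullet I would read the coefficients of $h^*(x)$ directly off the reduced forms. When $n$ is odd, $h^*(x) = 1 + x + \cdots + x^{n-1}$ has all coefficients equal to $1$, which is positive, palindromic, and unimodal. When $n = 2m$, $h^*(x) = (1 + x + \cdots + x^{m-1})^2$ has coefficient sequence $1, 2, \ldots, m-1, m, m-1, \ldots, 2, 1$, the self-convolution of the all-ones vector, which is manifestly positive, symmetric, and unimodal. The symmetry can alternatively be deduced from the functional equation, which forces $h^*(x) = x^{\deg h^*}\,h^*(1/x)$.

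I expect the reciprocity step to be the main obstacle: keeping the signs correct requires care with the expansion-at-infinity lemma and with the coefficient bookkeeping, and it is precisely here that the two parities must be reconciled, the factor $(-1)^n$ in the functional equation producing the $(-1)^{n-1}$ of the statement. Once the functional equation is in hand, however, the second and third bullets fall out together, and the first and fourth are essentially immediate from the reduced generating functions.
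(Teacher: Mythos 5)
Your proof is correct, and on the first and fourth bullets it coincides with the paper's own argument: both reduce the generating function to $h^*(x)/(1-x)^d$ with $d=2n-1$ for $n$ odd and $d=2n-2$ for $n=2m$, extract the degree $d-1$ from the binomial series (your leading-coefficient remark $h^*(1)/(d-1)!$ with $h^*(1)=n$ or $m^2$ makes the exactness explicit, which the paper leaves tacit), and read positivity, symmetry, and unimodality directly from $h^*(x)=1+x+\cdots+x^{n-1}$, respectively $(1+x+\cdots+x^{m-1})^2$. Where you genuinely diverge is on reciprocity and vanishing: the paper disposes of both with the single remark that they ``follow by expanding the binomial coefficients above into factorials,'' i.e., a direct computation on the explicit counting formulas. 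For $n$ odd, the negated-argument identity for binomial coefficients collapses the formula of Theorem 1 to
$$H_G(-r)\ =\ \left(\begin{array}{c} r+n-1 \\ 2n-1\end{array}\right)-\left(\begin{array}{c} r-1 \\ 2n-1\end{array}\right)\ =\ H_G(r-n),$$
with the vanishing at $r=-1,\dots,-n+1$ immediate because both upper entries are then nonnegative integers strictly below $2n-1$; the even case is the same manipulation applied to the three-term formula of Theorem 2, producing the sign $(-1)^{n-1}=-1$. You instead prove the functional equation $F_n(1/x)=(-1)^n x^n F_n(x)$ --- which checks out in both parities --- and invoke the standard expansion-at-infinity lemma for rational functions whose coefficient stream is a polynomial (\cite{StE}, Chapter 4, or \cite{BSa}); your coefficient bookkeeping is right, including the window $1\le s\le n-1$ that forces the vanishing, and the reindexing to $H_G(-r)=(-1)^{n-1}H_G(r-n)$ is legitimate since the identity holds at infinitely many integers and both sides are polynomials. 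The trade-off: the paper's route is more elementary, using nothing beyond binomial identities, but it is a parity-by-parity verification that exhibits the properties without explaining them; your route requires one citable lemma but treats both parities uniformly through the single sign $(-1)^n$, delivers the second and third bullets from one coefficient comparison, and yields the palindromicity $h^*(x)=x^{\deg h^*}h^*(1/x)$ as a structural consequence of the functional equation rather than by inspection.
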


\begin{proof} The counting formula shows that $H_G(r)$ is a polynomial in $r$, with degree given by reducing $F_n(x)$ and applying the binomial series.  Once reduced, the numerator is a polynomial in $x$ with coefficients as noted. The remaining properties follow by expanding the binomial coefficients above into factorials. \qquad \qed
\end{proof}

\section{Alternative Counting Formulas}
\label{sec:8}

Alternative formulas for $H_{D_{2n}}(r)$ arise by either reducing the generating function or changing counting method.  We consider when $n$ is odd; for the even case, a similar formula follows with alteration for the discrete convolution.  See \cite{BHN} for the $h^*$-vector and generating function in the context of permutation polytopes.  

\begin{corollary}
Suppose $n$ is odd, and $G=D_{2n}.$ Then 
$$H_G(r) = \sum\limits_{i=0}^{n-1} \left( \begin{array}{c} r+ 2n-2-i \\ 2n-2 \end{array}\right)$$
\end{corollary}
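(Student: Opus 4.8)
The plan is to derive this directly from the generating function $F_n(x) = \frac{1-x^n}{(1-x)^{2n}}$ established in Theorem 1, rather than to recount. The key observation is that the numerator factors as $1 - x^n = (1-x)(1 + x + \cdots + x^{n-1})$, so a single factor of $(1-x)$ cancels against the denominator:
$$F_n(x) = \frac{1 + x + \cdots + x^{n-1}}{(1-x)^{2n-1}}.$$
This is exactly the reduced numerator-over-$(1-x)^{2n-1}$ form of the generating function, and it is already the right shape for reading off a sum of binomial coefficients.

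Next I would expand the denominator using the same binomial series invoked in the proof of Theorem 1, now with the parameter $s = 2n-1$ (so the lower index becomes $s-1 = 2n-2$):
$$\frac{1}{(1-x)^{2n-1}} = \sum_{r \ge 0} \left(\begin{array}{c} r + 2n-2 \\ 2n-2 \end{array}\right) x^r.$$
Multiplying by the polynomial $\sum_{i=0}^{n-1} x^i$ and collecting the coefficient of $x^r$ produces a finite convolution, giving
$$H_G(r) = \sum_{i=0}^{n-1} \left(\begin{array}{c} (r-i) + 2n-2 \\ 2n-2 \end{array}\right) = \sum_{i=0}^{n-1} \left(\begin{array}{c} r + 2n-2-i \\ 2n-2 \end{array}\right),$$
which is precisely the claimed identity.

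There is no serious obstacle here; the only point requiring care is the index bookkeeping in the convolution — matching the exponent $i$ on the term $x^i$ of the numerator with the shift $r \mapsto r-i$ in the binomial coefficient, and confirming that for $r < i$ the relevant coefficient vanishes so that the sum truncates correctly. Finally, I would remark that the same formula admits the \emph{changing counting method} reading mentioned at the start of the section: the term $\left(\begin{array}{c} r + 2n-2-i \\ 2n-2 \end{array}\right)$ counts the weak compositions of $r-i$ into $2n-1$ parts, so the sum recounts the representatives of Theorem 1 stratified by the cancellation of the $\chi_{det}$-relation, confirming the reduction independently.
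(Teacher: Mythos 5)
Your proof is correct, but it takes a genuinely different route from the paper's own proof of this corollary. The paper proves it by the ``changing counting method'' alternative announced at the start of the section: the canonical representatives from Theorem 1 --- $2n$-tuples of nonnegative integers summing to $r$ with at least one zero among the last $n$ entries --- are stratified into classes $X_0,\dots,X_{n-1}$, where $X_i$ consists of tuples whose last $i$ entries are nonzero and whose $(2n-i)$-th entry is zero; writing such a tuple as a fixed vector ending in $i$ ones plus a nonnegative vector with a forced zero in position $2n-i$ leaves $2n-1$ free coordinates summing to $r-i$, so $|X_i| = {r+2n-2-i \choose 2n-2}$, and the formula follows term by term. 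You instead reduce the generating function: cancel one factor of $1-x$ from $F_n(x)$, expand $1/(1-x)^{2n-1}$ by the binomial series, and convolve with $1+x+\cdots+x^{n-1}$. This is precisely what the paper itself acknowledges, immediately after its proof, as the effective content of the counting (``we have reduced the generating function and applied the binomial series''), so the two arguments are two sides of the same identity. Your route is shorter and mechanical, and makes the $h^*$-numerator $1+x+\cdots+x^{n-1}$ explicit; the paper's route gives each binomial term an intrinsic combinatorial meaning as $|X_i|$, in effect a bijective certificate that the $h^*$-coefficients are nonnegative, which connects to the last bullet of Corollary 1. You also correctly handled the one delicate point in the convolution, namely that ${r+2n-2-i \choose 2n-2}$ vanishes when $r<i$, so extending the sum over all $0\le i\le n-1$ is harmless. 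Your closing remark about the weak-composition reading of each term is accurate (it is exactly $|X_i|$), though your phrase about stratifying ``by the cancellation of the $\chi_{det}$-relation'' is vaguer than the paper's precise stratification by the position of the last zero among the final $n$ entries; this looseness is inessential, since your generating-function argument is complete on its own.
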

\begin{proof}
Using the proof of Theorem 1, we give an alternative counting scheme for the unique representations with sum $r$ as follows.
First, let $X_0$ denote all $2n$-tuples with a zero in the $2n$-th entry, and  let $X_i$ denote  all $2n$-tuples with the last $i$ entries  nonzero and a zero in the $2n-i$-th entry. That is, all $X_i$ have elements of the form
$$(0, 0, \dots, 0, 1, \dots, 1) + (x_1, x_2, \dots, x_{2n-i-1}, 0, x_{2n-i+1}, \dots),$$
where the first vector ends with $i$ ones and all $x_k \ge 0.$ Then, for $0\le i \le n-1$, the number of elements in $X_i$ is 
the $i$-th term of the sum.\qquad \qed
\end{proof}

Effectively, we have reduced the generating function and applied the binomial series.  When $n$ is odd, the numerator of the reduced generating function is 
$$h^*(x)= 1+x+x^2+\dots + x^{n-1};$$ when $n$ is even, this numerator is 
$$h^*(x) = (1+x+x^2+\dots+ x^{m-1})^2 = 1+2x+ 3x^2 + \dots + 3 x^{n-4} + 2 x^{n-3}+ x^{n-2}.$$

Another formula follows from the Principle of Inclusion-Exclusion (\cite{BSa}, p. 39); here we sum directly over the choice of $i$ positions for zeros in the last $n$ entries.  See Theorem 1.2 of \cite{BDO} for a similar formula.

\begin{corollary}
Suppose $n$ is odd, and $G=D_{2n}.$ Then 
$$H_G(r) = \sum\limits_{i=1}^{n}  (-1)^{i+1} \left(\begin{array}{c} n \\ i \end{array}\right)\left(\begin{array}{c} r+ 2n-1-i \\ 2n-1-i \end{array}\right).$$
\end{corollary}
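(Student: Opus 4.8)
The plan is to apply the Principle of Inclusion-Exclusion directly to the combinatorial model established in the proof of Theorem 1. Recall that $H_G(r)$ equals the number of $2n$-tuples $(x_1,\dots,x_{2n})$ of non-negative integers with $\sum x_i = r$ and \emph{at least one} of the final $n$ entries $x_{n+1},\dots,x_{2n}$ equal to zero. First I would introduce the events whose union must be counted: for $1\le j \le n$, let $A_j$ denote the set of tuples summing to $r$ in which $x_{n+j}=0$. The condition that at least one of the last $n$ coordinates vanishes is precisely membership in $A_1 \cup \cdots \cup A_n$, so that $H_G(r) = |A_1 \cup \cdots \cup A_n|$.

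Next I would invoke Inclusion-Exclusion in the form
$$|A_1 \cup \cdots \cup A_n| = \sum\limits_{i=1}^{n} (-1)^{i+1} \sum\limits_{|S|=i} \Bigl| \bigcap\limits_{j\in S} A_j \Bigr|,$$
where $S$ ranges over the $i$-element subsets of $\{1,\dots,n\}$. The key computation is the cardinality of a single intersection $\bigcap_{j\in S} A_j$ with $|S|=i$: here $i$ of the last $n$ coordinates are forced to be zero, so one distributes $r$ freely among the remaining $2n-i$ coordinates. This weak-composition count gives
$$\left(\begin{array}{c} r+(2n-i)-1 \\ (2n-i)-1 \end{array}\right) = \left(\begin{array}{c} r+2n-1-i \\ 2n-1-i \end{array}\right)$$
solutions, a value depending only on $i$ and not on the particular set $S$. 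Since there are $\left(\begin{array}{c} n \\ i \end{array}\right)$ subsets of size $i$, the inner sum collapses to $\left(\begin{array}{c} n \\ i \end{array}\right)\left(\begin{array}{c} r+2n-1-i \\ 2n-1-i \end{array}\right)$, and substituting back yields the stated formula.

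There is no genuine obstacle here; the result follows immediately once the model from Theorem 1 is in hand. The only point requiring care is the index bookkeeping, namely verifying that forcing $i$ of the last $n$ entries to zero leaves exactly $2n-i$ free boxes, and rewriting the resulting weak-composition count $\left(\begin{array}{c} r+(2n-i)-1 \\ (2n-i)-1 \end{array}\right)$ in the shifted form $\left(\begin{array}{c} r+2n-1-i \\ 2n-1-i \end{array}\right)$ appearing in the statement. Since both this alternating sum and the two-term expression of Theorem 1 enumerate the very same set $A_1 \cup \cdots \cup A_n$, their equality provides an automatic consistency check on the sign and index conventions.
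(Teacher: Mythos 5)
Your proposal is correct and takes essentially the same route as the paper: the paper explicitly derives this corollary from the Principle of Inclusion-Exclusion by summing over the choice of $i$ positions for zeros in the last $n$ entries of the $2n$-tuple model from Theorem 1, which is exactly your argument with the intersection counts $\left(\begin{array}{c} r+2n-1-i \\ 2n-1-i \end{array}\right)$ written out. The only cosmetic difference is that the paper's consistency check runs through the generating function (extending the sum to $i=0$ to recover $\frac{-x^n}{(1-x)^{2n}}$), whereas you check against the two-term formula of Theorem 1; both confirm the same identity.
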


To reconcile this formula with the generating function,  if we start the sum at $i=0$, then the binomial series gives
$$\sum\limits_{r\ge 0} H(r) x^r = \sum\limits_{i=0}^n (-1)^{i+1} \left(\begin{array}{c} n \\ i \end{array}\right) \frac{(1-x)^i}{(1-x)^{2n}} = \frac{-x^n}{(1-x)^{2n}}.$$
Now we subtract  the term at $i=0$ to get  the result.

\section{Orthogonal Idempotents for $MM(D_{2n})$}
\label{sec:9}

Let $N=\langle R\rangle$ be the normal subgroup of rotations in $D_{2n}.$  The algebra $MM(D_{2n})$ extends the commutative algebra $Circ(n)=MM(N)$ of circulant matrices of size $n$.  As a commutative algebra generated by  a permutation matrix, this algebra, of dimension $n$,  may be simultaneously diagonalized. In fact, with respect to the action of $N\times N$, we have
$$\mathbb{C}[N]\ \cong\ Im(\Phi_{\rho|_N})\ \cong \ \bigoplus\limits_{\chi\in N^*} Hom_\mathbb{C}(\mathbb{C}_\chi, \mathbb{C}_\chi),$$
where $N^*$ is the set of $n$ characters $(\chi, \mathbb{C}_\chi)$ on $N$. As a decomposition into simple two-sided ideals of $\mathbb{C}[N]$ and $Circ(n)$, the elements
$$e_\chi=\frac1{n}\sum \chi(R^{-k}) e_{R^k}\qquad and \qquad U_\chi = \Phi_\rho(e_\chi) = \frac1{n}\sum \chi(R^{-k}) P_{R^k}$$
respectively span the isotypic component $Hom(\chi, \chi)$ for $N\times N.$  In particular, if $\omega=e^{2\pi i/n}$, $0\le j \le n-1,$ and $\chi_j(R)=\omega^j,$  then
$$U_{\chi_j} = \quad \frac1{n}\ \sum\limits_{k=0}^{n-1}\ \ \omega^{-jk} \ P_{R^k} \quad = \quad  
\frac1{n}\ \left|\begin{array}{ccccc}  1 &\  \omega^j \ &\  \omega^{2j} &\ \  \omega^{3j} &\dots \\ \ \ \  \omega^{-j} & 1 & \omega^{j} & \  \ \omega^{2j} & \dots \\  \quad\,  \omega^{-2j} & \ \ \  \omega^{-j} & 1 &\  \omega^{j} & \dots \\ \ \dots & \dots & \dots & \dots &\dots  \end{array}\right|$$
is evidently circulant and semi-magic with line sum equal to one if $\chi$ is trivial, zero otherwise.

In addition to giving diagonalizing bases for these algebras, we obtain a complete set of orthogonal idempotents corresponding to each simple two-sided ideal; that is, for instance, on $Circ(n),$
$$U_\chi^2=U_\chi,\qquad U_\chi U_{\chi'}=0\ \ (\chi\ne \chi'), \quad \sum U_\chi = I.$$
Then, as a decomposition of simple two-sided ideals, 
$$Circ(n) = \sum\limits_{\chi} Circ(n)U_\chi = \sum\limits_{\chi} \mathbb{C}U_\chi.$$

As seen in Propositions 1 and  2, a similar decomposition for $MM(D_{2n})$ consists of simple two-sided ideals with diagonal blocks of size 1 and 2.  A full description of this decomposition is given by the corresponding complete set of orthogonal idempotents:

\begin{theorem} Suppose $n$ is odd, and define $c_j = \cos(2j\pi/n)$.  The complete set of $\frac{n+1}2$ orthogonal idempotents for $MM(D_{2n})$ is given by 
$$U_{triv} = \frac1{n}\ J,$$
and, for each $1\le j \le \frac{n-1}2$,
$$U_{\pi_2^j} = \quad \frac2{n}\ \sum\limits_{k=0}^{n-1}\ \cos(2kj\pi/n) \ P_{R^k} \quad = \quad  
\frac2{n}\ \left|\begin{array}{ccccc} \ \  1 & \ \ c_{j} & \ \ \ c_{2j} & \ \ c_{3j} &\dots \\  \ \ c_j & \ \ 1 & \ \ c_{j} & \ \ \ c_{2j} & \dots \\  \ \ \ c_{2j} & \ \ c_j & \ \ 1 & \ \  c_{j} & \dots \\  \dots & \dots & \dots & \dots &\dots  \end{array}\right|.$$
\end{theorem}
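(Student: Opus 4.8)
The plan is to realize each idempotent as the image under $\Phi_\rho$ of a central primitive idempotent of the group algebra, and then to simplify using the character values already tabulated. Recall that for any irreducible $(\pi, V_\pi)$ of $G=D_{2n}$ the element
$$e_\pi = \frac{\dim V_\pi}{|G|}\sum_{g\in G}\chi_\pi(g^{-1})\,e_g$$
is the central primitive idempotent of $\mathbb{C}[G]$ projecting onto the $\pi$-isotypic component; for one-dimensional $\chi$ this is precisely the element of Proposition 3. Since $\Phi_\rho$ is a homomorphism of algebras, $U_\pi := \Phi_\rho(e_\pi)$ is an idempotent in $MM(D_{2n})$ acting on $V_\rho=\mathbb{C}^n$ as the projection onto the $\pi$-isotypic subspace, hence is the identity of the simple two-sided ideal $Hom_\mathbb{C}(V_\pi, V_\pi)$. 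By Proposition 5(a) the only block lost to the kernel is $\chi_{det}$, so the surviving blocks are $\chi_{triv}$ together with $\pi_2^j$ for $1\le j\le \frac{n-1}{2}$, giving exactly $\frac{n+1}{2}$ idempotents.

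First I would compute $U_{triv}$. With $\dim=1$ and $\chi_{triv}\equiv 1$ one gets $U_{triv}=\frac{1}{2n}\sum_{g}P_g$. Splitting the sum into rotations and reflections and invoking the kernel relation of Proposition 5, namely $\sum_k P_{R^k}=\sum_k P_{CR^k}=J$, the two halves coincide and collapse to $U_{triv}=\frac1n J$.

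Next I would compute $U_{\pi_2^j}$. Here $\dim=2$, so $U_{\pi_2^j}=\frac1n\sum_g \chi_{\pi_2^j}(g^{-1})P_g$. The crucial simplification is read directly from Table 1: the character of $\pi_2^j$ vanishes on every reflection, so the coefficient of $e_g$ in $e_{\pi_2^j}$ already vanishes for each reflection $g$, leaving only the rotation terms. Using that cosine is even, $\chi_{\pi_2^j}(R^{-k})=2\cos(2jk\pi/n)=\chi_{\pi_2^j}(R^{k})$, which produces the stated coefficient $\frac2n\cos(2jk\pi/n)$ on $P_{R^k}$; the explicit circulant matrix form then follows from the circulant description of the $P_{R^k}$ in Section 2. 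Thus each nontrivial idempotent is purely circulant.

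Finally, the defining relations of a complete set of orthogonal idempotents are inherited formally from $\mathbb{C}[G]$: idempotency $U_\pi^2=\Phi_\rho(e_\pi^2)=U_\pi$, orthogonality $U_\pi U_{\pi'}=\Phi_\rho(e_\pi e_{\pi'})=0$ for $\pi\ne\pi'$, and completeness $\sum_{\pi\notin X_\rho}U_\pi=\Phi_\rho\bigl(\sum_\pi e_\pi\bigr)=\Phi_\rho(e_e)=I_n$, where we used that $\sum_\pi e_\pi$ is the identity $e_e$ of $\mathbb{C}[G]$ and that the omitted term $e_{\chi_{det}}$ lies in $Ker(\Phi_\rho)$. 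I do not expect a serious obstacle; the one point requiring care is the disappearance of the reflection terms and the collapse of $\frac{1}{2n}\sum_g P_g$ to $\frac1n J$, which together encode the fact that the $n$ reflections contribute no new idempotents and leave only the $\frac{n+1}{2}$ circulant projections.
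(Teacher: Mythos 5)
Your proposal is correct and takes essentially the same route as the paper: the paper also obtains each $U_\pi$ as the image of the central primitive idempotent, writing $U_\pi = P_\pi(I) = \frac{d_\pi}{2n}\sum_\sigma \chi_\pi(\sigma^{-1})P_\sigma$ via the projection formula, and likewise inherits idempotency, orthogonality, and completeness from the group algebra. Your explicit verifications --- collapsing $\frac{1}{2n}\sum_g P_g$ to $\frac{1}{n}J$ via the kernel relation, and dropping the reflection terms because $\chi_{\pi_2^j}$ vanishes there --- merely spell out details the paper's terser proof leaves implicit.
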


\begin{proof}
Propositions 1 and  2 identify the isotypic components for $D_{2n}$ in $\rho$.  To obtain each $U$, we note the general projection formula onto the isotypic component for $\pi$:
$$P_\pi(v) = \frac{d_\pi}{|G|} \sum\limits_{g} \ \chi_\pi(g^{-1}) \rho(g) v,$$
where $d_\pi$ is the dimension of the irreducible representation $(\pi, V_\pi).$
Since the blocks in the group algebra may be identified uniquely by the left or right action alone, we use the left action applied to the identity element to obtain
$$U_\pi = P_\pi(I) = \frac{d_\pi}{2n} \sum\limits_{\sigma} \chi_\pi(\sigma^{-1}) P_\sigma.$$
The idempotent, orthogonality, and completeness properties for $U$ are inherited from the $P_\pi.$\qquad \qed
\end{proof}

\begin{theorem} Suppose $n=2m$, and preserve the notation of the previous theorem.  The complete set of $m+1$ orthogonal idempotents for $MM(D_{2n})$ is given by 
$$U_{triv} = \frac1{n}\ J,$$
$$U_{\chi'} = \frac1{n}\ \left|\begin{array}{ccccc} \ \ \ 1 & -1 & \ \ \ 1 & -1 &\dots \\  -1 & \ \ \ 1 & -1 & \ \ \ 1 & \dots \\  \ \ \ 1 & -1 & \ \ \ 1 & -1 & \dots \\  \dots & \dots & \dots & \dots &\dots  \end{array}\right|$$
and,  as defined in Theorem 3,  each $U_{\pi_2^j}$ for $1\le j \le m-1$.
\end{theorem}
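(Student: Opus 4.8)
The plan is to follow the proof of Theorem 3 essentially verbatim in structure, substituting Proposition 2 for Proposition 1 and Proposition 5(b) for Proposition 5(a). By Proposition 2 the irreducible types occurring in $\rho$ are $\chi_{triv}$, $\chi'$, and the $\pi_2^j$ for $1\le j\le m-1$, giving $1+1+(m-1)=m+1$ simple two-sided ideals and matching the claimed number of idempotents. For each such $\pi$ I would again set $U_\pi = P_\pi(I) = \frac{d_\pi}{2n}\sum_\sigma \chi_\pi(\sigma^{-1})P_\sigma$, and the idempotent, orthogonality, and completeness relations would be inherited, as before, from the fact that the $P_\pi$ form a complete set of orthogonal projections onto the isotypic components.

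Two of the three families require no new computation. For $U_{triv}$ the character is identically $1$; since the rotations sum to $J$ and the reflections also sum to $J$ as permutation matrices, the full sum is $2J$ and $U_{triv}=\frac{1}{2n}\cdot 2J=\frac1n J$. For $U_{\pi_2^j}$, the character of $\pi_2^j$ vanishes on every reflection by Table 2, so only the rotation terms survive; the computation then reduces exactly to the one in Theorem 3 and returns the same circulant matrix $\frac2n\sum_k\cos(2kj\pi/n)\,P_{R^k}$.

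The one genuinely new case, and the step I expect to be the main obstacle, is $U_{\chi'}$, whose defining character does not vanish on reflections, so that the naive sum $\frac1{2n}\sum_\sigma\chi'(\sigma)P_\sigma$ carries reflection terms and is not visibly circulant. The key is to exploit the kernel relation supplied by Proposition 5(b). Since $\chi'$ and $\chi''$ are the two distinct members of $\{\chi_{sgn},\,\chi_{det}\cdot\chi_{sgn}\}$, they satisfy $\chi'=\chi_{det}\cdot\chi''$; hence $\chi'$ agrees with $\chi''$ on the rotations and is its negative on the reflections. Substituting this into $U_{\chi'}=\frac1{2n}\sum_\sigma\chi'(\sigma)P_\sigma$ (using that $\chi'$ is real, so $\chi'(\sigma^{-1})=\chi'(\sigma)$) and then applying the relation $\sum_\sigma\chi''(\sigma)P_\sigma=0$ to rewrite the reflection sum as the negative of the rotation sum, the reflection contributions cancel while the rotation contributions double, leaving $U_{\chi'}=\frac1n\sum_{k=0}^{n-1}\chi''(R^k)P_{R^k}$. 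Finally $\chi''(R^k)=(-1)^k$ on the rotations, so $U_{\chi'}=\frac1n\sum_{k=0}^{n-1}(-1)^k P_{R^k}$, which is exactly the alternating circulant matrix displayed in the statement. This reduction of a reflection-supported projection to a purely circulant form is the crux of the argument; once it is in hand, the completeness and orthogonality of the full family follow formally from the $P_\pi$ as in Theorem 3.
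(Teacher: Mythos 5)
Your proposal is correct and takes essentially the same approach as the paper: both proofs run the projection formula $U_\pi = P_\pi(I) = \frac{d_\pi}{2n}\sum_\sigma \chi_\pi(\sigma^{-1})P_\sigma$ over the types supplied by Proposition 2 and inherit idempotency, orthogonality, and completeness from the isotypic projections $P_\pi$. Your handling of $U_{\chi'}$ via the kernel relation $\sum_\sigma \chi''(\sigma)P_\sigma = 0$ is just a repackaging of the identities $P_C J_1 = J_2$ and $P_C J_2 = J_1$ that the paper notes for exactly this step, since both amount to the matrix equalities $\sum P_{CR^{2k}} = \sum P_{R^{2k+1}}$ and $\sum P_{CR^{2k+1}} = \sum P_{R^{2k}}$.
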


\begin{proof}
The proof is unchanged from the previous theorem.  For the second idempotent, it is helpful to note 
$$P_CJ_2 = P_C \sum P_{R^{2k+1}} = \sum P_{CR^{2k+1}} = \sum P_{R^{2k}}=J_1,$$ 
$$P_C J_1 = P_C \sum P_{R^{2k}} = \sum P_{CR^{2k}} = \sum P_{R^{2k+1}} = J_2.\quad \qed$$
\end{proof}

Several items are worth noting in both theorems:
\begin{itemize}
\item each $U$ is symmetric and circulant,
\item $U_{triv}$ has line sum equal to 1,
\item $U$ has line sum equal to 0 otherwise,
\item although true from general theory, that these $U$ form a complete set of orthogonal idempotents is directly  checked,
\item the set of all $U$ gives a basis for the center of $MM(D_{2n})$, and 
\item the two-dimensional idempotents are twice the real parts of the idempotents $U_{\chi_j}$ in $Circ(n)$. 
\end{itemize}

\begin{corollary}  As a sum of simple two-sided ideals,
$$MM(D_{2n})\   =\  \mathbb{C}J \  \oplus\   \bigoplus\limits_{j=1}^{\frac{n-1}2} MM(D_{2n}) U_{\pi_2^j}\quad (n\ {odd}),\ \ and$$
$$MM(D_{2n}) =  \mathbb{C}J\   \oplus\   \mathbb{C}U_{\chi'} \ \oplus\  \bigoplus\limits_{j=1}^{m-1} MM(D_{2n}) U_{\pi_2^j}\ \ (n=2m).$$
When $n$ is odd (resp. $n=2m$), the dimension of $MM(D_{2n})$ is $2n-1$ (resp. $2n-2$).
\end{corollary}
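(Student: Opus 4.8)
The plan is to assemble this decomposition directly from the Artin--Wedderburn structure already exposed in Section~5 and then read off the dimension. The starting point is the identity $MM(D_{2n}) = Im(\Phi_\rho)$ together with the Schur's Lemma decomposition recorded just before Proposition~5, namely
$$Im(\Phi_\rho) = \bigoplus_{\pi' \notin X_\rho} Hom_\mathbb{C}(V_{\pi'}, V_{\pi'}),$$
which already exhibits $MM(D_{2n})$ as a direct sum of simple two-sided ideals indexed by the irreducible classes \emph{not} lying in $X_\rho$. So the core of the argument is bookkeeping: identifying the surviving classes and matching each block to the ideal generated by the appropriate idempotent.

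Next I would pin down which classes survive. By Proposition~5, $X_\rho = \{\chi_{det}\}$ when $n$ is odd and $X_\rho = \{\chi_{det}, \chi''\}$ when $n = 2m$. Comparing with the explicit decompositions of $\rho$ in Propositions~1 and~2, the complement $\widehat{G} \setminus X_\rho$ is precisely the set of irreducibles appearing in $\rho$: for $n$ odd these are $\chi_{triv}$ together with $\pi_2^j$ for $1 \le j \le \frac{n-1}2$, and for $n = 2m$ they are $\chi_{triv}$, $\chi'$, and $\pi_2^j$ for $1 \le j \le m-1$. The remaining task is to match each summand $Hom_\mathbb{C}(V_{\pi'}, V_{\pi'})$ with the ideal $MM(D_{2n}) U_{\pi'}$. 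Since the note following Theorem~4 records that the $U_\pi$ form a complete set of \emph{central} orthogonal idempotents, each $U_{\pi'}$ serves as the identity of its block, so $MM(D_{2n}) U_{\pi'} = U_{\pi'}\, MM(D_{2n})\, U_{\pi'} = Hom_\mathbb{C}(V_{\pi'}, V_{\pi'})$. For the one-dimensional classes this reads $\mathbb{C}J = \mathbb{C}U_{triv}$ (using $U_{triv} = \frac1n J$) and $\mathbb{C}U_{\chi'}$, which is exactly the displayed direct sums.

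For the dimension I would argue in two ways, using the second as a check on the first. Summing block dimensions, each one-dimensional class contributes $1^2 = 1$ and each two-dimensional class contributes $2^2 = 4$, giving $1 + 4\cdot\frac{n-1}2 = 2n-1$ for $n$ odd and $1 + 1 + 4(m-1) = 4m - 2 = 2n-2$ for $n = 2m$. Independently, the isomorphism $MM(D_{2n}) \cong \mathbb{C}[D_{2n}]/Ker(\Phi_\rho)$ gives $\dim MM(D_{2n}) = 2n - \dim Ker(\Phi_\rho)$, and Proposition~5 supplies $\dim Ker(\Phi_\rho) = 1$ or $2$, recovering the same values.

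I do not expect a serious obstacle, since every ingredient is already in place and the decomposition is essentially the Peter--Weyl/Schur's Lemma output specialized through Proposition~5. The one point requiring care is the identification $MM(D_{2n}) U_{\pi'} = Hom_\mathbb{C}(V_{\pi'}, V_{\pi'})$, which hinges on $U_{\pi'}$ being \emph{central} rather than merely idempotent, so that the one-sided product already realizes the full two-sided block; this centrality is precisely what the complete orthogonal idempotent structure of Theorems~3 and~4 guarantees.
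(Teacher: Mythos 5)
Your proposal is correct and follows exactly the route the paper intends: the paper states this corollary without a separate proof, treating it as immediate from the decomposition $Im(\Phi_\rho)=\bigoplus_{\pi'\notin X_\rho} Hom_\mathbb{C}(V_{\pi'},V_{\pi'})$ of Section 5, Proposition 5's identification of $X_\rho$, and the central orthogonal idempotents of Theorems 3 and 4. Your bookkeeping (matching each block to $MM(D_{2n})U_{\pi'}$ via centrality, and the dimension counts $1+4\cdot\frac{n-1}{2}=2n-1$ and $1+1+4(m-1)=2n-2$, cross-checked against $2n-\dim Ker(\Phi_\rho)$) fills in precisely the steps the paper leaves implicit.
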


Now Propositions 1 and 2 are given explicitly by the following:

\begin{corollary}
Fix $n\ge 3$, define $c_j$ as before, and let $s_j=\sin(2j\pi/n)$.  The orthogonal decomposition of $\mathbb{C}^n$ into $D_{2n}$-invariant subspaces under $\rho$ is given by
$$\mathbb{C}^n = \mathbb{C}(1, 1, \dots, 1)\ \oplus\ \bigoplus\limits_{j=1}^{\frac{n-1}2}\ V_{\pi_2^j}\quad (n\ odd), \ \  and$$
$$\mathbb{C}^n = \mathbb{C}(1, 1, \dots, 1)\ \oplus\  \mathbb{C}(1, -1, 1, \dots, -1)\ \oplus\ \bigoplus\limits_{j=1}^{m-1}\ V_{\pi_2^j}\quad (n=2m).$$
Here $V_{\pi_2^j}= U_{\pi_2^j} \mathbb{C}^n$ with an orthonormal basis given by
$$u_j=\sqrt{\frac2{n}}\ (1, c_j, c_{2j}, \dots, c_{(n-1)j}),\qquad v_j=\sqrt{\frac2{n}}\ (0, s_j, s_{2j}, \dots, s_{(n-1)j}).$$
\end{corollary}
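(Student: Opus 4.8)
The plan is to read the decomposition straight off the orthogonal idempotents of Theorems 3 and 4, and then to pin down the explicit basis of each two-dimensional piece by simultaneously diagonalizing the rotation $P_R$ over $\mathbb{C}$. First I would recall from the proof of Theorem 3 that, as an operator on $\mathbb{C}^n$, each idempotent $U_\pi$ coincides with the isotypic projection $P_\pi$, since $U_\pi v=\frac{d_\pi}{|G|}\sum_g\chi_\pi(g^{-1})\rho(g)v=P_\pi(v)$ directly from the formulas. Hence $U_{\pi_2^j}\mathbb{C}^n$ is exactly the $\pi_2^j$-isotypic component of $\rho$, and by Propositions 1 and 2 the multiplicity of $\pi_2^j$ is one, so this image is a single irreducible copy of $\pi_2^j$, of dimension two; this justifies the notation $V_{\pi_2^j}$. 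The direct sum decomposition is then immediate from completeness: since the $U$'s form a complete set of orthogonal idempotents, $\sum U=I_n$ gives $\mathbb{C}^n=\bigoplus\mathrm{Im}(U)$. As each $U$ is a real symmetric matrix (noted after Theorem 4), it is an orthogonal projection, and $UU'=0$ for distinct idempotents forces $\mathrm{Im}(U)\perp\mathrm{Im}(U')$, so the sum is orthogonal. The one-dimensional pieces are read off by inspection: $U_{triv}=\frac1n J$ fixes $(1,\dots,1)$, and for $n=2m$ the matrix $U_{\chi'}$ fixes $(1,-1,\dots,-1)$.

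Next I would make the two-dimensional pieces explicit. Set $\omega=e^{2\pi i/n}$ and $w_j=(1,\omega^j,\omega^{2j},\dots,\omega^{(n-1)j})$. Since $P_R e_i=e_{i+1}$, a one-line computation gives $P_R w_j=\omega^{-j}w_j$, so the $w_j$ diagonalize the rotations. Substituting $\cos(2jk\pi/n)=\tfrac12(\omega^{jk}+\omega^{-jk})$ into the defining sum for $U_{\pi_2^j}$ and applying the result to $w_{j'}$, the coefficient of $w_{j'}$ becomes $\frac1n\sum_{k}\big(\omega^{(j-j')k}+\omega^{-(j+j')k}\big)$; since $\sum_{k=0}^{n-1}\omega^{mk}$ equals $n$ when $n\mid m$ and $0$ otherwise (the root-of-unity identity underlying the Lemma), this collapses to $U_{\pi_2^j}w_{\pm j}=w_{\pm j}$ and $U_{\pi_2^j}w_{j'}=0$ for $j'\not\equiv\pm j$. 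Here I use that $2j\not\equiv0\pmod n$ throughout the stated ranges, so $w_j\ne w_{-j}$. Therefore $V_{\pi_2^j}=\mathbb{C}w_j\oplus\mathbb{C}w_{-j}$. Because $\overline{w_j}=w_{-j}$, the vectors $u_j=\sqrt{2/n}\,\mathrm{Re}(w_j)=\sqrt{2/n}\cdot\tfrac12(w_j+w_{-j})$ and $v_j=\sqrt{2/n}\,\mathrm{Im}(w_j)=\sqrt{2/n}\cdot\tfrac1{2i}(w_j-w_{-j})$ lie in $V_{\pi_2^j}$.

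Finally I would verify orthonormality, which finishes the proof since two orthonormal vectors in a two-dimensional space form a basis. Using $\cos^2\theta=\tfrac12(1+\cos2\theta)$ one gets $\|u_j\|^2=\frac{2}{n}\sum_k\cos^2(2jk\pi/n)=\frac1n\big(n+\sum_k\cos(4jk\pi/n)\big)=1$ by the Lemma applied to $2j$ (a nonmultiple of $n$ in the given range), and likewise $\|v_j\|^2=1$; the cross term $\langle u_j,v_j\rangle=\frac1n\sum_k\sin(4jk\pi/n)=0$ by the sine analogue of the Lemma. Orthogonality across different $j$ is already guaranteed by the first paragraph. The only genuine obstacle I anticipate is confirming that $u_j,v_j$ actually span $\mathrm{Im}(U_{\pi_2^j})$, and this is handled cleanly once the rotations are simultaneously diagonalized by the $w_j$, which reduces the question to a root-of-unity sum rather than a direct manipulation of the circulant entries of $U_{\pi_2^j}$.
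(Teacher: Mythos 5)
Your proof is correct, and it shares the paper's central device: both arguments diagonalize the rotation through the Fourier eigenvectors $w_j$ of $P_R$ (eigenvalue $\omega^{-j}$) and realize $u_j, v_j$ as normalized real and imaginary parts of $w_j$. The verifications, however, run along different lines, and the comparison is instructive. The paper checks directly that $\mathrm{span}(u_j, v_j)$ is $D_{2n}$-invariant and irreducible (invariance under the $\rho(R^k)$ plus $\rho(RC)u_j = u_j$, $\rho(RC)v_j = -v_j$), obtains orthonormality for free from the fact that $\rho(R)$ is orthogonal --- eigenvectors for distinct eigenvalues of a unitary operator are automatically orthogonal --- and leaves the identification of this span with $U_{\pi_2^j}\mathbb{C}^n$ implicit in the multiplicity-one statements of Propositions 1 and 2. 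You instead compute $U_{\pi_2^j}$ on the full eigenbasis $\{w_{j'}\}$, collapsing the cosine coefficients to root-of-unity sums, which establishes $\mathrm{Im}(U_{\pi_2^j}) = \mathbb{C}w_j \oplus \mathbb{C}w_{-j}$ outright (this is essentially a sharpened form of the paper's own suggested alternative of applying $U_{\pi_2^j}$ to $e_1, e_2$ and orthogonalizing, executed on the eigenbasis where it becomes transparent); it also renders your earlier appeal to multiplicity one redundant, since the image is computed to be two-dimensional directly. Your orthonormality check by explicit trigonometric sums via the Lemma requires the observation, which you correctly make, that $2j \not\equiv 0 \pmod n$ in the stated ranges --- a hypothesis the paper's unitarity argument absorbs silently. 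Finally, your derivation of the global orthogonal direct sum from completeness, orthogonality, and symmetry of the idempotents makes explicit what the paper delegates to Theorems 3--4 and Corollary 3. Net effect: the paper's route is shorter and more conceptual, while yours is more self-contained and pins down the equality $V_{\pi_2^j} = U_{\pi_2^j}\mathbb{C}^n$ by direct computation rather than by representation-theoretic bookkeeping.
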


\begin{proof}
If
$$w_{j}=\frac1{\sqrt{n}}\ (1, \omega^j, \omega^{2j}, \dots, \omega^{(n-1)j}),$$
 a unit eigenvector for $R$ with eigenvalue $\omega^{-j},$ then the vectors
 $$u_j = \frac{w_j+w_{-j}}{\sqrt{2}},\qquad v_j = \frac{w_{j}-w_{-j}}{\sqrt{2}i}$$ 
 span a subspace invariant and irreducible under $D_{2n}$. That is, the span is invariant under all $\rho(R^k)$, and 
 $$\rho(RC)u_j = u_j, \quad \rho(RC)v_j = -v_j.$$
 Since $\rho(R)$ is an orthogonal matrix, these eigenvectors are orthogonal with respect to the usual Hermitian inner product on $\mathbb{C}^n,$ and orthonormality in the corollary follows immediately. 
 
 Alternatively, we may simply apply $U_{\pi_2^j}$ to the standard basis vectors $e_1$ and $e_2$ and use Gram-Schmidt orthogonalization, or we may unravel the multiplication of basis vectors in the next section. \qquad \qed
\end{proof}

\section{Quaternionic bases}
\label{sec:10}

Finally we use the results of the previous section to give bases for each component of type $\pi_2^j$ in Corollary 4.  In addition to obtaining further examples of special semi-magic matrices, we exhibit the quaternionic structure for such components. 

Now we consider the imaginary part of $U_{\chi_j}$.   With $s_j = \sin(2j\pi/n),$ we define
$$U'_{\pi_2^j} = \quad  
\frac2{n}\ \left|\begin{array}{ccccc} \ \  0 & \ \ s_{j} & \ \ \ s_{2j} & \ \ s_{3j} &\dots \\  \ \ -s_j & \ \ 0 & \ \ s_{j} & \ \ \ s_{2j} & \dots \\  \ \ \ -s_{2j} & \ \ -s_j & \ \ 0 & \ \  s_{j} & \dots \\  \dots & \dots & \dots & \dots &\dots  \end{array}\right|.$$
From the idempotent properties of $U_{\chi_j}$ and $U_{\pi_2^j}$, one has
$$U_{\pi_2^j} U'_{\pi_2^j} =  U'_{\pi_2^j}U_{\pi_2^j}  =  U'_{\pi_2^j},\qquad  (U'_{\pi_2^j})^2 = -U_{\pi_2^j}.$$
Furthermore, we have
$$P_C U_{\pi_2^j} P_C = U_{\pi_2^j},\qquad P_C U'_{\pi_2^j} P_C = -U'_{\pi_2^j}.$$
We immediately obtain
\begin{theorem}
The component $MM(D_{2n})_{\pi_2^j}$ associated to  $\pi_2^j$ in $MM(D_{2n})$ obtains a quaternionic structure as follows:  define a basis
$$q_1 = U_{\pi_2^j}, \quad  q_2 = iP_CU_{\pi_2^j}, \quad q_3 = U'_{\pi_2^j},\quad q_4 = iP_CU'_{\pi_2^j}.$$
Then, for all $1\le t \le 4,$ $q_1q_t=q_tq_1=q_t$, and 
$$q_2^2=q_3^2=q_4^2=-q_1,\quad q_2q_3q_4=-q_1.$$
\end{theorem}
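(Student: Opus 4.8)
The plan is to reduce the entire statement to the structural relations already recorded for $U := U_{\pi_2^j}$ and $U' := U'_{\pi_2^j}$, together with one observation about $P_C$. The relations in hand are $U^2 = U$, the absorption identities $UU' = U'U = U'$ and $(U')^2 = -U$, and the conjugation identities $P_C U P_C = U$ and $P_C U' P_C = -U'$. The first and most important step is to convert these last two into commutation relations. Since $C$ has order two, $P_C^2 = P_{C^2} = I$; multiplying $P_C U P_C = U$ on the right by $P_C$ therefore gives $P_C U = U P_C$, and the same manipulation turns $P_C U' P_C = -U'$ into $P_C U' = -U' P_C$. Thus $P_C$ commutes with $U$ and anticommutes with $U'$, and these two facts, together with $P_C^2 = I$, are the only inputs the remaining computations require.

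With this dictionary, each assertion becomes a one-line calculation. For the identity relations $q_1 q_t = q_t q_1 = q_t$, I would compute $q_1 q_2 = U(iP_C U) = iP_C U^2 = iP_C U = q_2$ using that $P_C$ commutes with $U$, together with $q_1 q_3 = UU' = U' = q_3$ and $q_1 q_4 = U(iP_C U') = iP_C U U' = iP_C U' = q_4$; the reversed products follow identically from $U'U = U'$ and $U^2 = U$. For the squares, $q_2^2 = -P_C U P_C U = -U P_C^2 U = -U^2 = -q_1$, while $q_3^2 = (U')^2 = -U = -q_1$ is immediate, and $q_4^2 = -P_C U' P_C U'$ reduces to $-q_1$ once $P_C U' = -U' P_C$ and $(U')^2 = -U$ are applied.

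The final relation is the cleanest. Since $q_2 q_3 = iP_C(UU') = iP_C U' = q_4$, the triple product collapses to $q_2 q_3 q_4 = q_4^2 = -q_1$, which is exactly the square just computed. Because the four displayed identities $q_2^2 = q_3^2 = q_4^2 = q_2 q_3 q_4 = -q_1$ are precisely Hamilton's presentation of the quaternions, with $q_1$ serving as the unit, this establishes the claimed quaternionic structure on the span of $q_1,\dots,q_4$. There is no genuine obstacle here; the only thing to watch is the sign bookkeeping, where each factor of $i$ contributes $i^2 = -1$ and each passage of $P_C$ across $U'$ contributes a further sign, so the two sources of sign must be tracked together. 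The single conceptual point worth isolating is the opening move, trading the conjugation relations for (anti)commutation relations via $P_C^2 = I$, since without it the products do not visibly simplify.
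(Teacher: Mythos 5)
Your proposal is correct and follows essentially the same route as the paper, which states the identities $U U' = U'U = U'$, $(U')^2 = -U$, $P_C U P_C = U$, and $P_C U' P_C = -U'$ and then declares the theorem immediate; your contribution is simply to make the implicit computation explicit, using $P_C^2 = I$ to trade the conjugation relations for (anti)commutation relations and then verifying each product, and all your sign bookkeeping checks out.
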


\begin{example}  When $n=3,$ there is only one two-dimensional type, with corresponding basis in $MM(D_6)$, the space of semi-magic squares of size 3:
$$q_1 =
\frac1{3}\ \left|\begin{array}{ccc} \ \ \  2 & -1 &  -1  \\   -1& \ \ \ 2 &  -1  \\   -1 &  -1& \ \ \ 2  \end{array}\ \right|, \quad q_2 = \frac{i}{3}\ \left|\begin{array}{ccc}  -1 & -1 & \ \ \  2 \\   -1& \ \ \ 2 &  -1  \\  \ \ \ 2 &  -1& -1  \end{array}\ \right|,$$

$$q_3 = 
\frac{\sqrt{3}}{3}\ \left|\begin{array}{ccc}  \ \ \ 0 &  \ \ \  1 &  -1  \\   -1 & \ \ \  0 & \ \ \  1  \\ \ \ \    1 &  -1 & \ \ \  0  \end{array}\ \right|,\quad q_4 = 
\frac{i\sqrt{3}}{3}\ \left|\begin{array}{ccc}\ \ \  1 & -1 & \ \ \  0  \\   -1 & \ \ \  0 & \ \ \ 1  \\   \ \ \ 0 & \ \ \ 1 & -1 \end{array}\ \right|.$$
We obtain a basis for $MM(D_{6})$ by including $J$.

The case $n=4$ also has a single two-dimensional type. We have
$$q_1 =
\frac1{2}\ \left|\begin{array}{cccc} \ \ \ 1 & \ \ \ 0 &  -1 & \ \ \ 0 \\  \ \ \  0 &\ \ \  1 & \ \ \ 0 & -1  \\ -1 & \ \ \ 0 & \ \ \ 1 & \ \ \ 0  \\ \ \ \  0 & -1 &\ \ \  0 &\ \ \  1  \end{array}\ \right|,\quad q_2 = 
\frac{i}{2}\ \left|\begin{array}{cccc}  \ \ \  0 & -1 & \ \ \  0 & \ \ \ 1\ \\   -1 & \ \ \ 0 & \ \ \ 1 & \ \ \ 0\  \\ \ \ \ 0 & \ \ \  1 & \ \ \ 0 & -1\  \\  \ \ \ 1 & \ \ \ 0 &  -1 & \ \ \ 0 \ \end{array}\right|,$$  
$$q_3 = 
\frac1{2}\ \left|\begin{array}{cccc}  \ \ \  0 & \ \ \   1 & \ \ \   0 & -1\ \\   -1 & \ \ \  0 & \ \  \ 1 & \ \ \ 0\  \\ \ \ \   0 &  -1 & \ \ \   0 & \ \ \  1\  \\ \ \ \   1 & \ \ \ 0 & -1 & \ \ \  0  \end{array}\right|,\quad q_4 = \frac{i}{2}\ \left|\begin{array}{cccc}   \ \  \ 1 & \ \ \ 0 &  -1 & \ \ \ 0\ \\  \ \ \  0 & -1 & \ \ \ 0 & \ \ \ 1\  \\   -1 & \ \ \ 0 & \ \ \ 1 & \ \ \ 0\  \\ \ \ \  0 & \ \ \ 1 & \ \ \ 0 & -1 \ \end{array}\right|.$$
\end{example}
Again we obtain a basis for $MM(D_8)$ by including $J_1$ and $J_2.$

%%%%%%%%%%%%%%%%%%%%%%%% referenc.tex %%%%%%%%%%%%%%%%%%%%%%%%%%%%%%
% sample references
% %
% Use this file as a template for your own input.
%
%%%%%%%%%%%%%%%%%%%%%%%% Springer-Verlag %%%%%%%%%%%%%%%%%%%%%%%%%%
%
% BibTeX users please use
% \bibliographystyle{}
% \bibliography{}
%
%
% and use \bibitem to create references.

\end{document}